\documentclass[a4paper,11pt]{article}
\usepackage{adjustbox}
\usepackage{aligned-overset}
\usepackage{amsmath,amsthm}
\usepackage{authblk}
\usepackage[style=numeric-comp,maxbibnames=99,bibencoding=utf8,giveninits=true,backend=biber]{biblatex}
\usepackage{booktabs}
\usepackage{longtable}
\usepackage{multirow}
\usepackage{cancel}
\usepackage{cases}
\usepackage{colortbl}
\usepackage[hmargin={26mm,26mm},vmargin={30mm,35mm}]{geometry}
\usepackage{nicefrac}
\usepackage[colorlinks,allcolors={blue}]{hyperref}
\usepackage{paralist}
\usepackage{subcaption}
\usepackage{enumitem}
\usepackage[compact,small]{titlesec}
\usepackage{tikz-cd}
\usetikzlibrary{arrows}
\usepackage{multicol}
\usepackage{comment}

\usepackage{newtxtext}
\usepackage{newtxmath}

\bibliography{rm}

\newcommand{\email}[1]{\href{mailto:#1}{#1}}

\numberwithin{equation}{section}

\newcommand{\adj}{\ast}

\newtheorem{theorem}{Theorem}
\newtheorem{proposition}[theorem]{Proposition}
\newtheorem{lemma}[theorem]{Lemma}

\theoremstyle{remark}
\newtheorem{remark}[theorem]{Remark}
\theoremstyle{definition}

\newcommand{\Real}{\mathbb{R}}

\DeclareRobustCommand{\bvec}[1]{\boldsymbol{#1}}
\newcommand{\cvec}[1]{\bvec{\mathcal{#1}}}

\newcommand{\Cspace}[1]{\mathcal C_{#1}}
\newcommand{\HSob}{H_{1}(\Omega)^2 \times H_1 (\Omega)}
\newcommand{\HSobz}{H_{1,0}(\Omega)^2 \times  H_{1,0}(\Omega)}

\DeclareMathOperator{\GRAD}{\bf grad}
\DeclareMathOperator{\CURL}{\bf curl}
\DeclareMathOperator{\DIV}{div}
\DeclareMathOperator{\ROT}{rot}
\DeclareMathOperator{\VROT}{\bf rot}
\DeclareMathOperator{\hess}{\boldsymbol{\mathcal{H}}}

\newcommand{\compl}{{\rm c}}

\newcommand{\symbolproj}{\pi}
\newcommand{\lproj}[2]{\symbolproj_{\mathcal{P},#2}^{#1}}
\newcommand{\vlproj}[2]{\bvec{\symbolproj}_{\cvec{P},#2}^{#1}}
\newcommand{\Rproj}[2]{\bvec{\symbolproj}_{\cvec{R},#2}^{#1}}
\newcommand{\Rcproj}[2]{\bvec{\symbolproj}_{\cvec{R},#2}^{\compl,#1}}

\newcommand{\edges}[1]{\mathcal{E}_{#1}}
\newcommand{\vertices}[1]{\mathcal{V}_{#1}}

\newcommand{\ET}{\edges{T}}

\newcommand{\VT}{\vertices{T}}

\newcommand{\VE}{\vertices{E}}

\newcommand{\normal}{\bvec{n}}
\newcommand{\tangent}{\bvec{t}}

\newcommand{\letterPoly}{\mathcal{P}}
\newcommand{\Poly}[1]{\letterPoly^{#1}}
\newcommand{\Roly}[1]{\cvec{R}^{#1}}
\newcommand{\cRoly}[1]{\cvec{R}^{\compl,#1}}

\newcommand{\stokes}{2}
\newcommand{\ddr}{1}

\newcommand{\norm}[2]{\|#2\|_{#1}}
\newcommand{\seminorm}[2]{|#2|_{#1}}
\newcommand{\vvvert}{\vert\kern-0.25ex\vert\kern-0.25ex\vert}

\newcommand{\Th}{\mathcal{T}_h}
\newcommand{\Eh}{\mathcal{E}_h}
\newcommand{\Ehi}{\mathcal{E}_{h}^{\text{i}}}
\newcommand{\Vh}{\mathcal{V}_h}

\newcommand{\Pgrad}[2]{\bvec{P}_{\GRAD,#2}^{#1}}
\newcommand{\Prot}[2]{\bvec{P}_{\VROT,#2}^{#1}}

\newcommand{\SPgrad}{P_{\stokes,T}^{k+1}}
\newcommand{\SPgradh}{P_{\stokes,h}^{k+1}}

\newcommand{\SProt}{\bvec{P}_{\ROT,T}^{k}}

\newcommand{\XSgrad}[1]{\underline{H}_{2}^{k}(#1)}
\newcommand{\XSgrado}[1]{\underline{H}_{2,1,0}^{k}(#1)}

\newcommand{\XSrot}[1]{\underline{H}_{1}^{k+1}(#1)^2}

\newcommand{\ISgrad}{\underline{I}_{\stokes,h}^{k}}

\newcommand{\SGRAD}[1]{\underline{\bvec{G}}^{k}_{2,#1}}

\newcommand{\Gqen}{G^{\normal}_{q,E}}
\newcommand{\Gqv}{\bvec G_{q,V}}
\newcommand{\Gqet}{G^{\tangent}_{2,E}}
\newcommand{\nablaT}{\bvec{G}_{2,T}^{k}}
\newcommand{\nablaTfull}{\bvec{G}_{2,T}^{k}}

\newcommand{\gammaS}{\gamma^{k+1}_{\stokes,\partial T}}

\newcommand{\Id}{\mathrm{Id}}
\newcommand{\sskw}{\mathop{\mathrm{sskw}}}

\newcommand{\XdRgrad}[1]{\underline{H}_{1}^{k+1}(#1)}
\newcommand{\XdRrot}[1]{\underline{\boldsymbol{H}}_{\operatorname{\boldsymbol{\mathrm{rot}}}}^{k+1}(#1)}
\newcommand{\XdRroto}[1]{\underline{\boldsymbol{H}}_{\operatorname{\boldsymbol{\mathrm{rot}}},0}^{k+1}(#1)}

\newcommand{\tXdRgrad}[1]{\XdRgrad{#1}^2}
\newcommand{\tXdRgrado}[1]{\underline{H}_{1,0}^{k+1}(#1)^2}
\newcommand{\tXdRrot}[1]{\XdRrot{#1}}
\newcommand{\tXdRroto}[1]{\XdRroto{#1}}

\newcommand{\tGRAD}[1]{\underline{\bvec G}_{1,#1}^{k+1}}
\newcommand{\tGRADs}[1]{\underline{\bvec G}_{1,s,#1}^{k+1}}

\newcommand{\IdRgrad}{\underline{\bvec{I}}_{\ddr,h}^{k+1}}
\newcommand{\IdRgradT}{\underline{\bvec{I}}_{\ddr,T}^{k+1}}
\newcommand{\IdRrot}{\underline{\bvec{I}}_{\operatorname{\boldsymbol{\mathrm{rot}}},h}^{k+1}}

\newcommand{\tGsTfull}{\bvec{G}_{1,s,T}^{k+1}}
\newcommand{\tGshfull}{\bvec{G}_{1,s,h}^{k+1}}
\newcommand{\tGT}{\bvec{G}_{1,T}^{k+1}}
\newcommand{\tGTfull}{\bvec{G}_{1,T}^{k+1}}

\newcommand{\tGE}{\bvec{G}^{k+1}_{1,E}}

\newcommand{\gammadR}[1]{\bvec{\gamma}^{k+2}_{\ddr,#1}}

\newcommand\tr{\operatorname{tr}}

\newcommand{\Hess}[1]{\underline{\bvec{\mathcal{H}}}^{k+1}_{#1}}
\newcommand{\Hessh}{\bvec{\mathcal{H}}^{k+1}_{h}}

\newcommand{\dof}[2]{\underline{#1}_{#2}}
\newcommand{\dofh}[1]{\underline{#1}_h}
\newcommand{\dofT}[1]{\underline{#1}_T}

\newcommand{\const}[2]{\mathcal{E}_h((\bvec{\psi},u),(#1,#2))}

\newcommand{\injgrad}[1]{i_{\GRAD,#1}}

\usepackage{pgfplots,pgfplotstable}
\usepackage{tikz-cd}
\graphicspath{{figures/}}

\newcommand{\logLogSlopeTriangle}[5]
{
    \pgfplotsextra
    {
        \pgfkeysgetvalue{/pgfplots/xmin}{\xmin}
        \pgfkeysgetvalue{/pgfplots/xmax}{\xmax}
        \pgfkeysgetvalue{/pgfplots/ymin}{\ymin}
        \pgfkeysgetvalue{/pgfplots/ymax}{\ymax}

        \pgfmathsetmacro{\xArel}{#1}
        \pgfmathsetmacro{\yArel}{#3}
        \pgfmathsetmacro{\xBrel}{#1-#2}
        \pgfmathsetmacro{\yBrel}{\yArel}
        \pgfmathsetmacro{\xCrel}{\xArel}

        \pgfmathsetmacro{\lnxB}{\xmin*(1-(#1-#2))+\xmax*(#1-#2)} 
        \pgfmathsetmacro{\lnxA}{\xmin*(1-#1)+\xmax*#1} 
        \pgfmathsetmacro{\lnyA}{\ymin*(1-#3)+\ymax*#3} 
        \pgfmathsetmacro{\lnyC}{\lnyA+#4*(\lnxA-\lnxB)}
        \pgfmathsetmacro{\yCrel}{(\lnyC-\ymin)/(\ymax-\ymin)}

        \coordinate (A) at (rel axis cs:\xArel,\yArel);
        \coordinate (B) at (rel axis cs:\xBrel,\yBrel);
        \coordinate (C) at (rel axis cs:\xCrel,\yCrel);

        \draw[#5]   (A)-- node[pos=0.5,anchor=north] {\scriptsize{1}}
                    (B)-- 
                    (C)-- node[pos=0.,anchor=west] {\scriptsize{#4}} 
                    cycle;
    }
}

\begin{document}

\title{An arbitrary-order BGG-based discrete scheme for the Reissner--Mindlin plate problem on polygonal meshes}

\author[1]{Arax Leroy}
\affil[1]{IMAG, Univ. Montpellier, CNRS, Montpellier, France, \email{arax.leroy@umontpellier.fr}}
\maketitle

\begin{abstract}
We design and analyse an arbitrary-order numerical scheme for the Reissner--Mindlin plate problem on general polygonal meshes. The scheme is derived from the Hodge--Laplacian associated with a discrete Bernstein--Gelfand--Gelfand (BGG) twisted complex and exploits a discrete $H_2$-based construction for the transverse displacement. We establish a discrete Korn inequality for the underlying Discrete de Rham method (DDR) spaces and prove the convergence of the method. At the lowest order, the analysis yields an error estimate that is uniform with respect to the plate thickness, showing that the scheme is locking-free. Numerical experiments on several families of polygonal meshes support the theoretical results and illustrate the benefits of the enhanced continuity of the transverse displacement discretisation.
  \medskip\\
  \textbf{Key words.} Reissner–Mindlin plate, BGG complexes, polygonal meshes, locking-free methods, discrete Korn inequalities. 
  \medskip\\
  \textbf{MSC2020.} 65N12, 65N15, 65N30, 74K20, 74S05 
\end{abstract}

\section{Introduction}
\label{sec:introduction}

The Reissner--Mindlin model, introduced in the seminal works \cite{Reissner:45,Mindlin:51}, is a classical model for the bending of moderately thick elastic plates. Its displacement--rotation formulation accounts for transverse shear effects while involving only first-order derivatives, which makes it particularly suitable for numerical approximation.

A central challenge is to design discretisations that remain stable and accurate uniformly with respect to the plate thickness $t$. As this thickness tends to zero, the shear term enforces the Kirchhoff constraint relating the rotation to the gradient of the transverse displacement. Discrete spaces that do not reproduce this constraint appropriately may become excessively
stiff, leading to the well-known shear-locking phenomenon. A method is said to be locking-free when its stability and error estimates remain uniform with respect to $t$. The construction of such methods has been the subject of a large literature, see, among others, \cite{Brezzi.Fortin:86,Arnold.Falk:89,Brezzi.Bathe.Fortin:89,Gallistl.Schedensack:21}.

Finite element methods have been historically preferred for simulating solid mechanics models, such as plate problems; however, they present restrictions on the kinds of meshes they support -- typically conforming triangular or quadrangular meshes -- which limits their flexibility. By contrast, methods that are applicable on generic polygonal meshes present appealing benefits, such as seamless local mesh refinement (with hanging nodes) to capture steep solutions or complex geometry, or easy mesh agglomeration for multi-grid algorithms.
Several polygonal methods have been proposed for the Reissner--Mindlin problem. Virtual Element Method (VEM) formulations include an MITC-type construction \cite{Chinosi:18} and a shear--deflection formulation whose error estimates are uniform with respect to the plate thickness \cite{Beirao-da-Veiga.Mora.ea:19*1}. 
A locking-free Weak Galerkin method on general polygonal meshes was introduced in
\cite{Ye.Zhang.Zhang:20}. More recently, an arbitrary-order scheme combining the Discrete de Rham (DDR) and Hybrid High-Order methods was proposed and analysed in \cite{Di-Pietro.Droniou:22}; its lowest-order version was proved to be locking-free. Locking-free HDG schemes on polygonal meshes
have also been developed in \cite{Chen.Zhang.ea:25}. These contributions show that the combination of geometric flexibility and robustness in the thin-plate limit is a central objective in the design of modern plate discretisations. 

In this work, we propose and analyse the first scheme based on a BGG construction for the Reissner--Mindlin plate problem. At the continuous level, BGG techniques provide differential complexes relevant to elasticity and plate models; see, for instance, \cite{Arnold.Hu:21,Cap.Hu:24}. Within the finite element framework, BGG constructions have also been used to reinterpret existing stress and strain complexes; see \cite{Christiansen.Hu:23}. At the discrete level, the polygonal BGG diagram developed in \cite{Di-Pietro.Droniou.ea:26,Di-Pietro.Droniou.ea:26.1} combines a discrete Stokes complex with a tensorised DDR complex and produces the first twisted discrete complex on general polygonal meshes. The differential operators entering this complex naturally contain the shear and bending operators of the Reissner--Mindlin model.

Starting from this discrete twisted complex, we introduce an arbitrary-order scheme for the Reissner--Mindlin problem on polygonal meshes. The first formulation is obtained directly as the Hodge--Laplacian problem associated with the complex. In this formulation, the bending stabilisation acts on the discrete gradient of the rotation. The method has the expected consistency properties and the numerical experiments display the anticipated convergence rates. Its complete coercivity analysis would, however, require a discrete Korn inequality in which this gradient stabilisation controls the original rotation degrees of freedom. Such an estimate does not seem straightforward to establish.

This difficulty is closely related to the nonconforming character of the discrete spaces. In standard nonconforming Korn inequalities, the broken $H_1$-seminorm is generally bounded by the discrete symmetric gradient supplemented with terms directly controlling inter-element or cell-to-face jumps of the discrete rotation field; see \cite{Brenner:04}, \cite[Lemma~7.23]{Di-Pietro.Droniou:20}, and the VEM construction of \cite{Kwak.Park:22}. Motivated by this standard structure, we therefore consider a second formulation in which the consistent reconstructed symmetric-gradient term is unchanged, while the stabilisation is applied directly to the rotation degrees of freedom. This modification loses the strict Hodge--Laplacian interpretation, but it makes it possible to establish
a discrete Korn inequality adapted to the DDR spaces and, consequently, to
prove coercivity.

For this second formulation, we derive an arbitrary-order consistency and convergence estimate. At the lowest order, a suitable lifting argument yields an error estimate that is uniform with respect to the plate thickness, thereby showing that the scheme is locking-free. Numerical experiments are performed for both the Hodge--Laplacian formulation and the modified formulation, in order to compare their convergence properties and their behaviour in the thin-plate regime.
Additional numerical comparisons with the DDR--HHO scheme of \cite{Di-Pietro.Droniou:22} illustrate the benefits of the enhanced continuity provided by the BGG construction; see Section~\ref{sec.adventage.H2}.

The remainder of the paper is organised as follows. We first recall the continuous Reissner--Mindlin problem and its interpretation as the Hodge--Laplacian problem associated with a twisted complex in Section~\ref{sec:introduction}. In Section~\ref{sec:discrete.setting}, we introduce the discrete setting, spaces, operators, and scalar products. The discrete scheme and its variant are presented in Section~\ref{sec:discrete.schmes}. The stability and convergence analysis of the modified formulation, including the discrete Korn inequality and the lowest-order locking-free estimate, is presented in Sections~\ref{sec:analysis} and~\ref{sec:locking.free}. Numerical experiments are reported in Section~\ref{sec:numerical.results}. Finally, the convergence towards the limiting model is studied in Section~\ref{sec:conv.models}. A table summarising the notation is provided in Appendix~\ref{appendix:notations}.

\subsection{Continuous PDEs}

We consider the Reissner--Mindlin model for a clamped elastic plate occupying a bounded polygonal domain $\Omega \subset \mathbb{R}^2$. 
The unknowns are the transverse displacement $u:\Omega \to \mathbb{R}$ and the rotation vector 
$\bvec{\psi}:\Omega \to \mathbb{R}^2$ of the normal fibers. 
The shear stress vector $\bvec{\gamma}:\Omega \to \mathbb{R}^2$ is introduced as an auxiliary variable.
The strong formulation of the model is given in \eqref{eq:strong.rm}, where $t>0$ denotes the plate thickness, 
$\kappa>0$ is the shear correction factor, and $f$ represents the transverse load density.
Homogeneous clamped boundary conditions are imposed on $\partial\Omega$. 
\begin{subequations}\label{eq:strong.rm}
\begin{alignat}{2}
    \bvec{\gamma} + \text{div}(\bvec{C\GRAD\psi})&=0 &&\qquad \text{in} \ \Omega\\
     -\text{div}\bvec{\gamma} &=f &&\qquad \text{in} \ \Omega\\
     \bvec{\gamma} &=\frac{\kappa}{t^2}(\GRAD u -\bvec{\psi}) &&\qquad \text{in} \ \Omega\\
     \bvec{\psi}&=0, \ u=0 &&\qquad \text{on} \ \partial\Omega. \label{boundary.cond}
\end{alignat}
\end{subequations}
The fourth-order elasticity tensor $\bvec C$  is defined by $\bvec C \bvec A = \beta_0 \bvec A_s + \beta_1 \tr(\bvec A)\Id$ for every matrix $\bvec A$, where $\bvec A_s$ denotes the symmetric part of $\bvec A$ and $\beta_0,\beta_1>0$ are material-dependent constants; see, e.g., \cite[Section~1]{Di-Pietro.Droniou:22}.

Throughout the paper, regularity indices are systematically written as subscripts. Thus, $H_i(\Omega)$ denotes the usual Sobolev space while $H_{i,0}(\Omega)$ denotes the closure of $C_{\infty,c}(\Omega)$ in $H_i(\Omega)$. This convention will be used consistently for both continuous spaces and their discrete counterparts. Superscripts are reserved for Cartesian powers and polynomial degrees.

A weak formulation of the problem reads: find $(\bvec{\psi},u)\in \HSobz$ such that
\eqref{eq:weak.rm.cont} holds.
The bilinear form $A(\cdot,\cdot)$ is the sum of a bending contribution $a(\cdot,\cdot)$ and a shear contribution
$b(\cdot,\cdot)$, while the linear form $l(\cdot)$ accounts for the external loading.

\begin{equation}
\label{eq:weak.rm.cont}
    A((\bvec{\psi},u),(\bvec{\phi},v))=l(v), \ \forall (\bvec{\phi},v)\in \HSobz,
\end{equation}
where the bilinear form $A:\left[\HSob\right]^2 \longmapsto \Real$ and the linear form $l:H_1(\Omega)\longmapsto\Real$ are such that, for all $((\bvec{\xi},w),(\bvec{\phi},v))\in \left[\HSob\right]^2$,
\begin{equation*}
    A((\bvec{\xi},w),(\bvec{\phi},v)) := a(\bvec{\xi},\bvec{\phi})+b((\bvec{\xi},w),(\bvec{\phi},v)), \quad  l(v):=( f,v)_{L^2(\Omega)},
\end{equation*}
with
\begin{align*}
    a(\bvec{\xi},\bvec{\phi})&:= (\bvec{C \GRAD \xi,\GRAD\phi} )_{L^{2}(\Omega)^{2\times 2}},\\
    b((\bvec{\xi},w),(\bvec{\phi},v))&:=\frac{\kappa}{t^2}( \bvec{\xi}-\GRAD w ,\bvec{\phi} -\GRAD v)_{L^{2}(\Omega)^{2}}.
\end{align*}

\subsection{From twisted complex to Reissner--Mindlin equations}
\label{sec:Hodge-Laplace}

We will refer to the following complex as the twisted complex: 
\begin{equation}\label{twisted-stokes-HD}
    \begin{tikzcd}[ampersand replacement=\&, column sep=2em]
      0\arrow{r}\&
      \begin{pmatrix}
        H_{1,0}(\Omega)  \\
        H_{1,0}(\Omega)^{2}
      \end{pmatrix}
      \arrow{r}{
        \begin{pmatrix} \GRAD & -I \\ 0 & \GRAD \end{pmatrix}
      }\&[3.5em] \begin{pmatrix}
        \bvec{H}_{\ROT,0}(\Omega)  \\
     \boldsymbol    H_{\boldsymbol \ROT,0}(\Omega)
      \end{pmatrix} \arrow{r}{
        \begin{pmatrix} \ROT & -\sskw \\ 0 &\boldsymbol  \ROT \end{pmatrix}
      } \&[3.5em] \begin{pmatrix}
        L^{2}(\Omega)   \\
        L^{2}(\Omega)^{2}
      \end{pmatrix} \arrow{r}{} \&0,
    \end{tikzcd}
\end{equation}
 where, for sufficiently smooth vector- and tensor-valued functions
\[
\bvec{v} = \begin{pmatrix}
  v_1 \\ v_2
\end{pmatrix}: \Omega \to \Real^2\quad\text{ and }\quad
\bvec{\tau} = \begin{pmatrix}
  \tau_{11} & \tau_{12} \\ \tau_{21} & \tau_{22}
\end{pmatrix}: \Omega \to \Real^{2\times 2},
\]
$\ROT\bvec v\coloneq\partial_1v_2-\partial_2v_1$ is the scalar rotor, 
\[
\VROT\bvec\tau
\coloneq
\begin{pmatrix}
  \partial_1\tau_{12}-\partial_2\tau_{11}\\
  \partial_1\tau_{22}-\partial_2\tau_{21}
\end{pmatrix}
\]
is its row-wise extension to tensor fields, and $\sskw\bvec\tau\coloneq\tau_{12}-\tau_{21}$ the scalar skew-symmetrisation operator, $H_{\ROT}(\Omega)$ and $\bvec H_{\VROT}(\Omega)$ denote the spaces of square-integrable vector and tensor fields whose scalar and row-wise rotors respectively are also square-integrable, with the subscript $0$ indicating a vanishing tangential trace.

As explained in \cite{Cap.Hu:24}, the twisted complex is linked to the Reissner--Mindlin problem through the Hodge--Laplacian operator. Following the discussion in \cite{Di-Pietro.Droniou.ea:26.1}, we consider the following Hilbert complex and its adjoint operators,
\[
\begin{tikzcd}[ampersand replacement=\&, column sep=4em]
  X_{k-1}  \arrow[r, black, "D^{k-1}", shift=({0,0.3em})] \& X_k \arrow[r, black, "D^{k}", shift=({0,0.3em})] \arrow[l, black, "D^{\adj}_{k-1}", shift=({0,-0.3em})] \& X_{k+1} \arrow[l, black, "D^{\adj}_{k}",shift=({0,-0.3em})],
\end{tikzcd}
\]
where the adjoints are taken with respect to suitable $L^2$-like inner products, possibly involving positive-definite weights that account for physical parameters.
The Hodge--Laplacian on $X_k$ is defined by $\mathcal{L} \coloneqq D^{\adj}_{k}D^{k}+ D^{k-1}D^{\adj}_{k-1}$ and, for a source term $F\in (X_k)'$, the corresponding weak Hodge--Laplace problem reads: Find $u \in X_k$ such that
\begin{equation*}
  \langle D^{k}u , D^{k}q \rangle_{X_{k+1}} + \langle D^{\adj}_{k-1}u, D^{\adj}_{k-1}q\rangle_{X_{k-1}} = F(q)\qquad\forall q \in X_k.
\end{equation*}

The Hodge--Laplacian problem associated with the first space of the 
twisted complex \eqref{twisted-stokes-HD} reads as follows: find $
(\bvec\psi,u)\in \HSobz$ such that
\begin{equation*}
\left(
\mathcal D_0
\begin{pmatrix}
u\\
\bvec\psi
\end{pmatrix},
\mathcal D_0
\begin{pmatrix}
v\\
\bvec\phi
\end{pmatrix}
\right)_{ H_{\ROT}(\Omega) \times \bvec H_{\VROT}(\Omega)}=\left\langle F,
\begin{pmatrix}
v\\
\bvec\phi
\end{pmatrix}
\right\rangle,\qquad\forall (\bvec\phi,v)\in \HSobz,
\end{equation*}
where
\[
\mathcal D_0\coloneq
\begin{pmatrix}
\GRAD & -\Id\\
0 & \GRAD
\end{pmatrix},
\qquad\mathcal D_0
\begin{pmatrix}
u\\
\bvec\psi
\end{pmatrix}
=\begin{pmatrix}
\GRAD u-\bvec\psi\\
\GRAD\bvec\psi
\end{pmatrix}.
\]

Choosing a source term $F=(f,0)$ with $f\in L^2(\Omega)$ and endowing $H_{\ROT}(\Omega)  \times \bvec{H}_{\VROT}(\Omega)$ with the following suitable weighted  product,
\begin{align*}
    \left(
    \begin{pmatrix}
    \cdot\\
    \star
    \end{pmatrix} , 
     \begin{pmatrix}
    \cdot\\
    \star
    \end{pmatrix}\right)_{H_{\ROT}(\Omega)  \times \bvec{H}_{\VROT}(\Omega)}  = \frac{\kappa}{t^2}(\cdot,\cdot)_{L^2(\Omega)^2} + (\bvec{C} \star , \star)_{L^2(\Omega)^{2\times 2}} 
\end{align*}
we recover the Reissner--Mindlin weak formulation \eqref{eq:weak.rm.cont}. 

\section{Discrete setting}\label{sec:discrete.setting}

This section introduces the mesh notation and the polynomial spaces used
throughout the paper. 

\subsection{Mesh and notation}
\label{sec:setting}

Let $\Omega\subset\mathbb{R}^2$ be a bounded polygonal domain. We consider a
polygonal mesh $\mathcal{M}_h=(\Th,\Eh,\Vh)$ of $\Omega$. The set $\Th$ consists of a finite collection of pairwise disjoint open polygonal elements \(T\), with diameter \(h_T\), such that $\overline{\Omega}=\bigcup_{T\in\Th}\overline{T}.$ The meshsize is defined by $h\coloneq\max_{T\in\Th}h_T.$
The set $\Eh$ collects the open straight edges of the mesh, each edge
\(E\in\Eh\) having length \(h_E\), while \(\Vh\) denotes the set of mesh
vertices. The position vector of a vertex \(V\in\Vh\) is denoted by
\(\bvec{x}_V\).

We assume that the pair \((\Th,\Eh)\) satisfies the compatibility requirements
of \cite[Definition~1.4]{Di-Pietro.Droniou:20}. In particular, every mesh edge
belongs to the boundary of at least one element, and, for each \(T\in\Th\),
the boundary \(\partial T\) is the union of the closures of the edges collected
in the set \(\ET\). This setting allows, in particular, a straight portion of
an element boundary to be partitioned into several mesh edges, as may occur
after non-conforming local refinement.

For any mesh entity \(Y\in\Th\cup\Eh\), we denote by \(\mathcal{V}_Y\) the set
of its vertices. Each edge \(E\in\Eh\) is endowed with a fixed unit tangent
vector \(\tangent_E\), which determines its orientation. The associated unit
normal vector \(\normal_E\) is chosen so that
\((\tangent_E,\normal_E)\) is positively oriented.

For every $T\in\Th$ and $E\in\ET$, we introduce the orientation factor
$\omega_{TE}\in\{-1,1\}$ such that $\omega_{TE}\normal_E$ is the unit normal to $E$ pointing outwards from $T$. Similarly, for $E\in\Eh$ and $V\in\mathcal{V}_E$, the factor $\omega_{EV}\in\{-1,1\}$ is chosen so that
$\omega_{EV}\tangent_E$ points towards $V$.

Moreover, for $\bullet \in \{\Eh,\Vh\}$, we denote by $\bullet^{\rm b}$ the subset of mesh entities lying on the boundary, and we set $\bullet^{\rm i} \coloneq \bullet \setminus \bullet^{\rm b}$.

We consider a regular sequence of meshes in the sense of
\cite[Assumption~7.6]{Di-Pietro.Droniou:20}. Throughout the paper, the notation
\[
a\lesssim b
\]
means that \(a\leq Cb\), where the hidden constant \(C>0\) is independent of
the meshsize \(h\) and of the plate thickness. It may depend on the domain,
the mesh regularity parameter, the other material coefficients, and, when
polynomial spaces are involved, the polynomial degree. We write
\(a\simeq b\) whenever both \(a\lesssim b\) and \(b\lesssim a\) hold.

\subsection{Polynomial spaces}
\label{sec:polynomial.spaces}

For an integer \(\ell\) and a mesh entity \(Y\in\Th\cup\Eh\), we denote by
\(\Poly{\ell}(Y)\) the space formed by the restrictions to \(Y\) of
two-variable polynomials of total degree at most \(\ell\). We adopt the
convention $\Poly{\ell}(Y)\coloneq\{0\}$ for all $\ell\leq -1$.

The \(L^2(Y)\)-orthogonal projector onto \(\Poly{\ell}(Y)\) is denoted by $\lproj{\ell}{Y}:L^2(Y)\longrightarrow\Poly{\ell}(Y).$ Its component-wise extensions to vector- and tensor-valued functions are both
denoted by $\vlproj{\ell}{Y}$. The nature of the argument makes clear whether the vector- or tensor-valued
version is being used.

For \(\bullet\in\{T,h\}\), we define
\(\Poly{\ell}_{\mathrm c}(\mathcal{E}_\bullet)\) as the space of functions that
are continuous on $\bigcup_{E\in\mathcal{E}_\bullet}\overline{E}$ and whose restriction to every edge
\(E\in\mathcal{E}_\bullet\) belongs to \(\Poly{\ell}(E)\). In contrast, the
broken polynomial space on the mesh is denoted by
\[
\Poly{\ell}(\Th)\coloneq\left\{q\in L^2(\Omega)\,:\,q_{|T}\in\Poly{\ell}(T)\quad\forall T\in\Th\right\}.
\]
For a sufficiently smooth scalar-valued function $q$, we define its rotated
gradient by
\[
\CURL q\coloneq
\begin{pmatrix}
\partial_2 q\\
-\partial_1 q
\end{pmatrix}.
\]

For every element \(T\in\Th\), we select a point \(\bvec{x}_T\in T\) such that
\(T\) contains a ball centred at \(\bvec{x}_T\) whose diameter is $\simeq h_T$. For every integer \(\ell\geq0\), we then introduce the polynomial subspaces
\[
\Roly{\ell}(T) \coloneq \CURL\Poly{\ell+1}(T), \qquad \cRoly{\ell}(T) \coloneq (\bvec{x}-\bvec{x}_T)\Poly{\ell-1}(T).
\]
They yield the following direct, although non-orthogonal, decomposition of $\Poly{\ell}(T)^2$ \cite{Arnold:18}:
\begin{equation*}
\Poly{\ell}(T)^2=\Roly{\ell}(T)\oplus\cRoly{\ell}(T).
\end{equation*}
Finally, the \(L^2\)-orthogonal projectors onto $\Roly{\ell}(T)$ and $\cRoly{\ell}(T)$ are respectively denoted by $\Rproj{\ell}{T}$ and $\Rcproj{\ell}{T}$. The same notation is used for their component-wise extensions to the corresponding tensor-valued spaces, with the intended projector being unambiguously determined by the nature of the argument. 

\subsection{Discrete spaces and operators}
\label{sec:discrete.spaces.operators}

The following discrete complex, which provides a discrete counterpart of a higher regularity version of \eqref{twisted-stokes-HD}, is constructed using the BGG machinery described in \cite{Di-Pietro.Droniou.ea:26}. More precisely, in this construction, the roles of $H_1(\Omega)$ and $\bvec H_{\ROT}(\Omega)$ in the first row of \eqref{twisted-stokes-HD} are played by discrete spaces of $H_2(\Omega)$ and $H_1(\Omega)^2$, respectively. We briefly recall below the discrete spaces and operators involved in the first part of this complex, which is the part relevant to the construction of the Reissner--Mindlin scheme.

\begin{equation}\label{twisted-elasticity-boundary}
    \begin{tikzcd}[ampersand replacement=\&]
0\arrow{r}\&
      \begin{pmatrix}
        \XSgrado{\Th} \\
        \tXdRgrado{\Th}
      \end{pmatrix}
      \arrow{r}{
        \begin{pmatrix} \SGRAD{h} & -\Id \\ 0 &\tGRAD{h}  \end{pmatrix}
      }\&[3em]
      \begin{pmatrix}
        \tXdRgrado{\Th} \\
        \tXdRroto{\Th}
      \end{pmatrix}
    \end{tikzcd}
\end{equation} 

The use of a discrete counterpart of $H_2(\Omega)$ for the transverse displacement is motivated by both regularity and asymptotic considerations. For a convex domain $\Omega$ the transverse displacement solving \eqref{eq:weak.rm.cont} is expected to belong to $H_2(\Omega)$. This additional regularity is not required by the Reissner--Mindlin variational formulation, whose natural energy space for the displacement remains $H_{1}(\Omega)$, but it can be exploited at the discrete level. This choice is also natural from the physical viewpoint when considering the thin-plate limit. Indeed, owing to the factor $t^{-2}$ in the shear energy, a family of solutions with bounded energy is expected to formally satisfy, in $L^2$-norm,
\[
    \GRAD u-\bvec{\psi}\longrightarrow 0 \text{ as }t\longrightarrow 0.
\]
The limiting Kirchhoff constraint is therefore $\bvec{\psi}=\GRAD u$. Substituting this relation into the bending strain
gives
\[
    (\bvec{\GRAD}_s\bvec{\psi}) = (\bvec{\GRAD}_s(\GRAD u)) = \hess u,
\]
where $\hess \coloneqq \bvec{\GRAD} \GRAD$ is the Hessian operator.
Thus, in the thin-plate limit, the Reissner--Mindlin model formally reduces to the Kirchhoff--Love plate model, whose energy involves the Hessian of the transverse displacement. Requiring $u\in H_2(\Omega)$  is consequently consistent with the natural energy space of the limiting model and ensures that $\GRAD u\in H_1(\Omega)^2$. This relation between the Reissner--Mindlin and Kirchhoff--Love models is classical; see, for instance, \cite{Arnold.Falk:89}. The use of an $H_2$-conforming space for the transverse displacement also appears in the shear--deflection formulation of \cite{Beirao-da-Veiga.Mora.ea:19*1}.
The use of a more regular discrete space may improve the numerical behaviour of the approximations; see Section~\ref{sec.adventage.H2}.

\begin{remark}[Serendipity reduction]
     In the present work, we consider the non-serendipity version of this complex; that is, we do not employ the serendipity reduction introduced in this reference to decrease the polynomial degree of the cell unknowns. This choice is motivated by several considerations. First, it improves the legibility of the presentation, since the operators arising in the non-serendipity setting are simpler to define. Second, as only the first part of the twisted complex is required in the present application (see Section~\ref{sec:Hodge-Laplace}), there is no need to account for the discrete rotor operators appearing in the remainder of the complex. This is relevant because removing the serendipity reduction causes these discrete rotor operators to lose their local surjectivity.
\end{remark}

The first discrete space consists of discrete counterparts of $H_2(\Omega)$ and $H_1(\Omega)^2$, defined as follows:
\begin{align}
  \XSgrad{\Th}\coloneq \Big\{{}&\underline{q}_h=((q_T)_{T\in\Th},(q_E)_{E\in\Eh},(\Gqen)_{E\in\Eh},(q_V)_{V\in\Vh},(\Gqv)_{V\in\Vh})\,: \nonumber \\
         {}&q_T\in\Poly{k-1}(T)\quad\forall T\in\Th\,,\quad
         q_E\in\Poly{k-1}(E)\text{ and }\Gqen\in\Poly{k}(E)\quad\forall E\in\Eh\, \nonumber,\\
         {}&q_V\in\Real\text{ and }\Gqv\in\Real^2\quad\forall V\in\Vh\Big\}.
         \label{eq:def.Xhess}\\
           \tXdRgrad{\Th}\coloneq \Big\{{}&\underline{\bvec{v}}_h=((\bvec{v}_T)_{T\in\Th},(\bvec{v}_E)_{E\in\Eh},(\bvec{v}_V)_{V\in\Vh})\,:\nonumber\\
           {}&\bvec{v}_T\in\Poly{k}(T)^2\quad\forall T\in\Th\,,\quad
           \bvec{v}_E\in\Poly{k}(E)^2\quad\forall E\in\Eh\,, \nonumber\\
                {}&\bvec{v}_V\in\Real^2\quad\forall V\in\Vh\Big\}.
                \label{eq:def.Xgrad}
\end{align}

The second discrete space consists of discrete counterparts of $H_1(\Omega)^2$ and $\boldsymbol H_{\boldsymbol \ROT}(\Omega)$, the latter being 
\begin{align}
       \tXdRrot{\Th}\coloneq \Big\{{}&\underline{\bvec{\tau}}_h=((\bvec{\tau}_{\cvec{R},T},\bvec{\tau}_{\cvec{R},T}^{\compl})_{T\in\Th},(\bvec{\tau}_E)_{E\in\Eh})\,:\nonumber\\
        {}&(\bvec{\tau}_{\cvec{R},T},\bvec{\tau}_{\cvec{R},T}^{\compl})\in\Roly{k}(T)^2\oplus\cRoly{k+1}(T)^2\quad\forall T\in\Th\,,\quad
        \bvec{\tau}_E\in\Poly{k+1}(E)^2\quad\forall E\in\Eh\Big\}.
        \label{eq:def.Xrot}
\end{align}

To account for the boundary conditions \eqref{boundary.cond}, we define the following discrete counterparts of $H_2(\Omega)\cap H_{1,0}(\Omega)$, $H_{1,0}(\Omega)^2$ and $\bvec{H}_{\VROT,0}(\Omega)$, respectively:
\begin{align*}
  \XSgrado{\Th}
  &\coloneq
  \left\{
  \dofh{q}\in \XSgrad{\Th}
  \,:\,
  q_E=0 \quad \forall E\in\Eh^{\rm b},
  \quad
  q_V=0 \quad \forall V\in\Vh^{\rm b}
  \right\}, \\
  \tXdRgrado{\Th}
  &\coloneq
  \left\{
  \dofh{\bvec v} \in \tXdRgrad{\Th}
  \,:\,
  \bvec v_E=\bvec 0 \quad \forall E\in\Eh^{\rm b},
  \quad
  \bvec v_V=\bvec 0 \quad \forall V\in\Vh^{\rm b}
  \right\},\\
  \tXdRroto{\Th}
  &\coloneq
  \left\{
  \dofh{\bvec \xi} \in \tXdRrot{\Th}
  \,:\,
  \bvec\xi_E=\bvec 0 \quad \forall E\in\Eh^{\rm b}
  \right\}.
\end{align*}
Here, $\XSgrado{\Th}$ is the discrete counterpart of $H_2(\Omega)\cap H_{1,0}(\Omega)$, rather than $H_{2,0}(\Omega)$, since the normal-derivative degrees of freedom remain unconstrained.

We then introduce interpolators on these spaces to provide a representation of sufficiently smooth functions by vectors of polynomials. The interpolators $ \ISgrad:\Cspace{1}(\overline{\Omega})\to \XSgrad{\Th}$, $\IdRgrad:\Cspace{0}(\overline{\Omega})^2\to \tXdRgrad{\Th}$ and $\IdRrot:\Cspace{0}(\overline{\Omega})^{2\times 2}\to \tXdRrot{\Th}$ are defined by
\begin{alignat}{2}
    \ISgrad q\coloneq \Big({}&(\lproj{k-1}{T}q)_{T\in\Th},(\lproj{k-1}{E}q)_{E\in\Eh},(\lproj{k}{E}(\GRAD q\cdot\normal_E))_{E\in\Eh}, \nonumber
    \\
      {}&(q(\bvec{x}_V))_{V\in\Vh},(\GRAD q(\bvec{x}_V))_{V\in\Vh}
      \Big)&&\qquad\forall q\in \Cspace{1}(\overline{\Omega}), \label{eq:def.ISgrad}  \\
  \label{eq:def.IdRgrad}
  \IdRgrad \bvec{v}\coloneq {}&((\vlproj{k}{T}\bvec{v})_{T\in\Th},(\vlproj{k}{E}\bvec{v})_{E\in\Eh},(\bvec{v}(\bvec{x}_V))_{V\in\Vh})&&\qquad\forall \bvec{v}\in \Cspace{0}(\overline{\Omega})^2,\\
  \label{eq:def.IdRrot}
  \IdRrot \bvec{\tau}\coloneq {}&((\Rproj{k}{T} \bvec\tau,\Rcproj{k+1}{T}\bvec\tau)_{T\in\Th},(\vlproj{k+1}{E}(\bvec{\tau}\tangent_E))_{E\in\Eh})&&\qquad\forall \bvec{\tau}\in \Cspace{0}(\overline{\Omega})^{2\times 2}.
\end{alignat}

Throughout the paper, local versions of the discrete spaces are defined as follows. Given a mesh entity \(Y\in\Th\cup\Eh\), we retain all the unknowns associated with \(Y\), together with those attached to the lower-dimensional entities contained in its boundary. The resulting local space is denoted by replacing the global mesh argument \(\Th\) with \(Y\). For instance,
\[
\XSgrad{E}\coloneq
\begin{aligned}[t]
  \Big\{
  &\underline{q}_E
  = \big(q_E,\Gqen,(q_V)_{V\in\mathcal V_E},
  (\Gqv)_{V\in\mathcal V_E}\big)\,:
  \\
  &q_E\in\Poly{k-1}(E),\quad
  \Gqen\in\Poly{k}(E),\quad
  q_V\in\Real,\quad
  \Gqv\in\Real^2
  \quad\forall V\in\mathcal V_E
  \Big\}.
\end{aligned}
\]
Consistently with this convention, if \(\underline{v}_h\) is a global vector
of degrees of freedom, the collection of its components associated with
\(Y\) and its boundary is denoted by \(\underline{v}_Y\).

For $\dofh{\bvec v} \in \XdRgrad{\Th}$, we define its discrete gradient 
\begin{subequations}\label{eq:def.tGrad}
\begin{equation}\label{eq:def.uGh1}
\tGRAD{h}\dofh{\bvec v}\coloneq ((\Rproj{k}{T} \tGT\dof{\boldsymbol{v}}{T},\Rcproj{k+1}{T}\tGT\dof{\boldsymbol{v}}{T})_{T\in\Th},(\tGE\dof{\boldsymbol{v}}{E})_{E\in\Eh})
\in\tXdRrot{\Th},
\end{equation}
where, for all $E\in\Eh$ and all $T\in\Th$, the discrete edge gradient $\tGE\dof{\bvec v}{E}\in\Poly{k+1}(E)^2$ and discrete element gradient $\tGT\dofT{\bvec v}\in\Poly{k+1}(T)^{2\times 2}$ are respectively such that (with $\partial_{\tangent_E}$ the derivative along $E$ in the direction $\tangent_E$)
\begin{gather}
    \int_E \tGE\underline{\bvec{v}}_E\cdot\bvec{w}
    =-\int_E \bvec{v}_E\cdot \partial_{\tangent_E}\bvec{w}
    + \sum_{V\in\VE}\omega_{EV}\bvec{v}_V\cdot\bvec{w}(\bvec{x}_V)
    \qquad \forall \bvec{w}\in \Poly{k+1}(E)^2, \label{eq:def.tGrad.E}
    \\
    \int_T \tGT\underline{\bvec{v}}_T:\bvec{\zeta}
    = -\int_T \bvec{v}_T\cdot\DIV\bvec{\zeta}
    + \sum_{E\in\ET}\omega_{TE}\int_{E} \gammadR{\partial T}\dofT{\bvec{v}}\cdot (\bvec{\zeta} \normal_E)
    \qquad \forall\bvec{\zeta}\in\Poly{k+1}(T)^{2\times 2}.\nonumber
  \end{gather}
\end{subequations}
Above, the trace operator $\gammadR{\partial T}\dof{\bvec{v}}{T} \in\Poly{k+2}_c(\ET)^2$ is such that, for all $E\in\ET$, $\lproj{k}{E} \gammadR{\partial T}\dof{\bvec{v}}{T} = \bvec{v}_E$, and, for all $V\in\VT$, $\gammadR{\partial T}\dof{\bvec{v}}{T}(\bvec{x}_V) = \bvec{v}_V$. 
The discrete gradient of $\dofh{q}\in\XSgrad{\Th}$ is  given by
\begin{equation}\label{eq:def.nablah}
  \SGRAD{h}\dofh{q}=((\nablaT\underline{q}_T)_{T\in\Th},(\Gqet\dof{q}{E}\tangent_E + \Gqen \normal_E))_{E\in\Eh},(\Gqv)_{V\in\Vh})
  \in\XSrot{\Th},
\end{equation}
where, for all $E \in\Eh$ and all $T \in \Th$, the discrete tangential gradient $\Gqet \dof{q}{E} \in \Poly{k}(E)$ and the discrete element gradient $\nablaT\underline{q}_T\in\Poly{k}(T)^2$ are respectively such that
\begin{equation*}
  \int_E \Gqet\underline{q}_E\,r=-\int_E q_E \partial_{\tangent_E}r + \sum_{V\in\VE}\omega_{EV}\,q_V\,r(\bvec{x}_V)\qquad\forall r\in\Poly{k}(E).
\end{equation*}
\begin{equation*}
  \int_T\nablaT\underline{q}_T\cdot \bvec w =
  -\int_T q_T\DIV \bvec w
  + \sum_{E\in\ET}\omega_{TE}\int_E \gammaS \dofT{q}\, (\bvec w\cdot\normal_{E})\qquad
  \forall \bvec w\in\Poly{k}(T)^2.
\end{equation*}
The trace operator $\gammaS\dofT{q}$ is such that, for all $E\in\ET$,
$\lproj{k-1}{E}\,\gammaS\dof{q}{T} = q_E$, and, for all $V\in\VE$,
$\gammaS\dof{q}{T}(\bvec{x}_V) = q_V$.

On each cell $T\in\Th$, the discrete gradients are used to define the discrete potential operators: The discrete potential operator $\Pgrad{k+2}{T}:\tXdRgrad{T}\to\Poly{k+2}(T)^2$ is such that, for all $\dof{\bvec{v}}{T}\in\tXdRgrad{T}$,
\begin{equation}
\label{eq:def.potential.1}
 \int_T \Pgrad{k+2}{T} \dof{\bvec{v}}{T}\cdot \DIV \bvec{\chi}  = -\int_T \tGT  \dof{\bvec{v}}{T} : \bvec{\chi} + \sum_{E\in\ET} \omega_{TE} \int_E \gammadR{\partial T}\dof{\bvec{v}}{T} \cdot (\chi \normal_E) \qquad \forall \bvec \chi \in\cRoly{k+3}(T)^2.
\end{equation}
Then, the local potential operator $\SPgrad:\XSgrad{T}\to\Poly{k+1}(T)$ is such that:
\begin{equation}
\label{eq:def.potential.2}    
  \int_T \SPgrad \dof{q}{T}\DIV \bvec w
  = -\int_T \nablaTfull\dof{q}{T}\cdot \bvec w
    + \sum_{E\in\ET}\omega_{TE}\int_E \gammaS \dof{q}{T}(\bvec w \cdot \normal_E)
 \qquad \forall \bvec w\in \cRoly{k+2}(T).
\end{equation}

\begin{remark}[Rewriting the potential $\SPgrad$]
\label{rem:rewriting.pot}
In \cite[Eq.~(7.2)]{Di-Pietro.Droniou.ea:26}, for each $T\in\Th$, the definition of the potential \eqref{eq:def.potential.2} involves the operator $\SProt\SGRAD{T}$ instead of $\nablaTfull$, where $\SProt$ is a discrete potential on $\tXdRgrad{T}$. As we are not including any serendipity reduction in the complexes above, it can be seen that these two operators coincide thanks to \cite[Prop.~7]{Di-Pietro.Droniou:23}.
\end{remark}

We define the local discrete symmetric gradient $\tGsTfull$ as the symmetric part of the discrete gradient $\tGTfull$, that is, for all $\dofT{\bvec{v}} \in \tXdRgrad{T}$,
\[
\tGsTfull \dofT{\bvec{v}} \coloneq \frac{\tGTfull \dofT{\bvec{v}} + (\tGTfull \dofT{\bvec{v}})^{\top}}{2}.
\]
The symmetric gradient vector is such that
\[ 
\tGRADs{h} \dofh{\bvec v} \coloneq ((\Rproj{k}{T} \tGsTfull\dof{\boldsymbol{v}}{T},\Rcproj{k+1}{T}\tGsTfull\dof{\boldsymbol{v}}{T})_{T\in\Th},(\tGE\dof{\boldsymbol{v}}{E})_{E\in\Eh})
\in\tXdRrot{\Th}.
\]

Whenever an operator is defined locally on each mesh element \(T\in\Th\), we use the subscript \(h\) to denote its global broken counterpart, whose restriction to every \(T\in\Th\) coincides with the corresponding local operator. More precisely, if \(\mathcal O_T\) is the local operator, then $\mathcal O_h$ is defined such that \[ \left(\mathcal O_h \underline{v}_h\right)_{|T} = \mathcal O_T \underline{v}_T \qquad\forall T\in\Th. \]

\subsection{Discrete scalar products}
\label{sec:scalar.product}

The discrete scalar products are defined through a consistent part consisting of an $L^2$ product of the potential reconstructions plus a consistent stabilisation term. The scalar products on these spaces are defined as follows. For all $T\in \Th$,
\begin{align}
    ( \cdot, \cdot )_{2,T} &{}\coloneq ( \SPgrad \cdot ,\SPgrad \cdot )_{L^2(T)} + s_{2,T}(\cdot ,\cdot), \nonumber\\
    ( \cdot, \cdot )_{1,T} &{}\coloneq ( \Pgrad{k+2}{T} \cdot ,\Pgrad{k+2}{T} \cdot )_{L^2(T)^2} + s_{1,T}(\cdot ,\cdot), \label{eq:def.sp.grad}\\ 
    ( \cdot, \cdot )_{\VROT,T} &{}\coloneq ( \Prot{k+1}{T} \cdot ,\Prot{k+1}{T} \cdot )_{L^2(T)^{2\times 2}} + s_{\VROT,T}(\cdot ,\cdot), \label{eq:def.sp.rot}
\end{align}
where the stabilisation terms are described in \cite[Section 3.7]{Di-Pietro.Droniou.ea:26.1} and \cite[Eq.~(4.18)]{Di-Pietro.Droniou:23} (in the latter reference, the stabilisation considered there should be tensorised and defined at order $k+1$ to be consistent with the present framework). For the sake of legibility, we recall the definition of $s_{1,T}$: for all $\dofT{v},\dofT{w} \in\tXdRgrad{T}$,
\begin{equation}
\label{eq:def.s1T}
   s_{1,T}(\dofT{v},\dofT{w})\coloneq \sum_{E\in\ET} h_E \int_E (\Pgrad{k+2}{T}\dofT{v}-\gammadR{\partial T}\dofT{v})\cdot (\Pgrad{k+2}{T}\dofT{w}-\gammadR{\partial T}\dofT{w}) 
\end{equation}

The corresponding global scalar products and stabilisation forms are obtained by summing their local contributions. Namely, for $\star\in\left\{2,1,\VROT\right\}$,
\[
(\cdot,\cdot)_{\star,h}\coloneq\sum_{T\in\Th}(\cdot,\cdot)_{\star,T},\qquad s_{\star,h}(\cdot,\cdot)\coloneq\sum_{T\in\Th}s_{\star,T}(\cdot,\cdot).
\]

\section{Discrete schemes}
\label{sec:discrete.schmes}

\subsection{Discrete scheme as a Hodge--Laplacian problem}
\label{sec:discrete.scheme.hl}

We introduce the weighted discrete bilinear form on $\tXdRrot{\Th}$ defined by
\begin{equation*}
    (\cdot,\cdot)_{\bvec C,\VROT,h}\coloneq(\bvec C\,\Prot{k+1}{h}\cdot,\Prot{k+1}{h}\cdot)_{L^2(\Omega)^{2\times 2}}+s_{\VROT,h}(\cdot,\cdot).
\end{equation*}

The Hodge--Laplacian weak problem associated with the boundary complex
\eqref{twisted-elasticity-boundary} yields the following discrete scheme for the continuous problem
\eqref{eq:weak.rm.cont}: find
$(\dofh{\bvec{\psi}},\dofh{u})\in \tXdRgrado{\Th}\times \XSgrado{\Th}$ such that
\begin{equation}
\label{eq:weak.rm}
A_h((\dofh{\bvec{\psi}},\dofh{u}),(\dofh{\bvec{\phi}},\dofh{v}))=l_h(\dofh{v})\qquad\forall(\dofh{\bvec{\phi}},\dofh{v})\in \tXdRgrado{\Th}\times \XSgrado{\Th}.
\end{equation}
The bilinear form
$A_h:\left[\tXdRgrad{\Th}\times \XSgrad{\Th}\right]^2\to\Real$
and the linear form $l_h:\XSgrad{\Th}\to\Real$ are defined, for all $ ((\dofh{\bvec{\xi}},\dofh{w}),(\dofh{\bvec{\phi}},\dofh{v}))
\in \left[\tXdRgrad{\Th}\times \XSgrad{\Th}\right]^2$, by
\begin{equation}\label{eq:def.Ah}
A_h((\dofh{\bvec{\xi}},\dofh{w}),(\dofh{\bvec{\phi}},\dofh{v}))
\coloneq
a_h(\dofh{\bvec{\xi}},\dofh{\bvec{\phi}})+b_h((\dofh{\bvec{\xi}},\dofh{w}),(\dofh{\bvec{\phi}},\dofh{v})),\qquad l_h(\dofh{v})\coloneq( f , \SPgradh \dofh{v})_{L^2(\Omega)} ,
\end{equation}
with
\begin{align}
\label{eq:ah.hl}
a_h(\dofh{\bvec{\xi}},\dofh{\bvec{\phi}})&\coloneq(\tGRADs{h} \dofh{\bvec\xi},\tGRADs{h} \dofh{\bvec \phi})_{\bvec C,\VROT,h},\\
\label{eq:bh.hl}
b_h((\dofh{\bvec\xi},\dofh{w}),(\dofh{\bvec\phi},\dofh{v})) &\coloneq\frac{\kappa}{t^2}(\dofh{\bvec\xi}-\SGRAD{h}\dofh{w},\dofh{\bvec\phi}-\SGRAD{h}\dofh{v})_{1,h}.
\end{align}

Numerical tests confirm the expected behaviour of this scheme, namely convergence of order
$k+1$; see Section~\ref{sec:numerical.results}. Moreover, its consistency can be proved by following the same arguments as those used for the modified formulation below. The difficulty is not related to consistency, but to coercivity.
Indeed, the coercivity analysis of \eqref{eq:weak.rm} would require a discrete Korn inequality adapted to the stabilisation appearing in $(\cdot,\cdot)_{\bvec C,\VROT,h}$. More precisely, one would need an estimate of the form
\begin{equation}
\label{eq:korn.hl.required}
\|\dofh{\bvec\xi}\|_{1,h}^2 \lesssim
\|\tGshfull\dofh{\bvec\xi}\|_{L^2(\Omega)^{2\times 2}}^2
+s_{\VROT,h}(\tGRADs{h}\dofh{\bvec\xi},\tGRADs{h}\dofh{\bvec\xi})\qquad \forall \dofh{\bvec\xi}\in\tXdRgrado{\Th}.
\end{equation}
Such an estimate remains an open question. For this reason, the fully analysed scheme below uses a
modified stabilisation, chosen so as to fit the discrete Korn inequality available in our setting.

This Korn inequality is the key ingredient in the proof of coercivity; see Lemma~\ref{lem:coercivity}. The issue is specific to the non-conforming nature of polytopal methods and
does not arise in the same way for conforming discretisations, where the continuous Korn inequality can be used directly. We also note that similar strategies, involving stabilisations acting on the degrees of freedom rather than on their discrete gradients, are used in other polytopal non-conforming methods; see the discussion at the beginning of Section \ref{sec:discrete.korn} for references. For the same reason, such methods also depart from formulations that admit an interpretation as the Hodge--Laplacian associated with a complex.

If a Korn inequality of the form \eqref{eq:korn.hl.required} were available, possibly under additional restrictions on the tensor $\bvec C$, then the complete stability and convergence analysis would carry over to the Hodge--Laplacian formulation. For instance, assuming $\bvec C = \bvec \Id$ easily gives the expected inequality. Such a simplification also appears in the classical Reissner--Mindlin literature; see, e.g., \cite[p.~1279]{Arnold.Falk:89}.  In the present work, however, we base the proof on the variant introduced in the next section.

\subsection{Variant of the bilinear form \texorpdfstring{$a_h$}{a\_h}}
\label{sec:variant.ah}
We consider a variant of the discrete scheme \eqref{eq:weak.rm} in which only the stabilisation component of the bending bilinear form is modified. The discrete spaces, the consistent bending contribution, the shear bilinear form $b_h$, and the right-hand side $l_h$ remain unchanged. More precisely, we set
\begin{equation}
\label{eq:ah.modified}
a_h(\dofh{\bvec\xi},\dofh{\bvec\phi})\coloneq\big(\bvec C\tGshfull\dofh{\bvec\xi},\tGshfull\dofh{\bvec\phi}
\big)_{L^2(\Omega)^{2\times2}}+\beta_0\sum_{T\in\Th}h_T^{-2}s_{1,T}(\dofT{\bvec\xi},\dofT{\bvec\phi}).
\end{equation}

For comparison, using the definition of the scalar product \eqref{eq:def.sp.rot}, the bending bilinear form associated with the Hodge--Laplacian formulation can be written as
\begin{equation}
\label{eq:ah.simplification}
\big(\tGRADs{h}\dofh{\bvec\xi},\tGRADs{h}\dofh{\bvec\phi} \big)_{\bvec C,\VROT,h} = \big( \bvec C\tGshfull\dofh{\bvec\xi}, \tGshfull\dofh{\bvec\phi}\big)_{L^2(\Omega)^{2\times2}} +s_{\VROT,h} \big(\tGRADs{h}\dofh{\bvec\xi},\tGRADs{h}\dofh{\bvec\phi}\big).
\end{equation}
Hence, \eqref{eq:ah.modified} and \eqref{eq:ah.simplification} have the same consistent reconstructed symmetric-gradient contribution and differ only in their stabilisation terms.

Accordingly, the only modification required in the consistency analysis concerns the stabilisation contribution in \eqref{eq:def.consistency.grad.s}. The consistency of the stabilisation in \eqref{eq:ah.simplification} follows from the tensorised order-$(k+1)$ version of \cite[Eq.~(6.9)]{Di-Pietro.Droniou:23}, whereas that of the stabilisation in \eqref{eq:ah.modified} follows from the corresponding tensorised estimate for $s_{1,h}$; see \cite[Eq.~(6.8)]{Di-Pietro.Droniou:23}. Both stabilisations therefore provide the required approximation order. Since the remaining bending contribution, the shear term, and the load term are identical in the two formulations, the remainder of the consistency analysis is unchanged. In particular, replacing the stabilisation does not affect the order of the consistency error.

The distinction between the two formulations is essential for the coercivity analysis. The stabilisation in \eqref{eq:ah.modified} acts directly on the discrete vector unknown and is precisely the term required to apply the discrete Korn inequality \eqref{eq:korn}. By contrast, the stabilisation in \eqref{eq:ah.simplification} acts on its discrete gradient, and no corresponding discrete Korn inequality is available for this stabilisation. The modification \eqref{eq:ah.modified} therefore provides the control required for the coercivity argument, losing the strict interpretation of the scheme as the Hodge--Laplacian problem induced by the complex \eqref{twisted-elasticity-boundary}.

\section{Analysis}
\label{sec:analysis}

We define the following weighted energy semi-norm on $\tXdRgrad{\Th}\times \XSgrad{\Th}$,
\begin{equation}\begin{aligned}
\label{eq:def.norm.energy}
    \norm{1\times2,h}{(\dofh{\bvec \phi}, \dofh{v})}^2\coloneq{}& \norm{L^2(\Omega)^{2\times 2}}{\bvec{C} \tGshfull \dofh{\bvec\phi}}^2+ \beta_0 \sum_{T\in\Th} h_T^{-2}s_{1,T}(\dofT{\bvec \phi},\dofT{\bvec \phi}) + \frac{\kappa}{t^2} \norm{1,h}{\dofh{\bvec \phi} - \SGRAD{h}\dofh{v}}^2 \\
    &+\mu\left(\norm{1,h}{\dofh{\bvec\phi}}^2 + \norm{1,h}{\SGRAD{h}\dofh{v}}^2\right),
\end{aligned}
\end{equation}
where $\mu\coloneq\min(\kappa,\beta_0)$. Its restriction to  $\tXdRgrado{\Th}\times\XSgrado{\Th}$ is a norm; see discussion in \cite[Section 4.1]{Di-Pietro.Droniou:22}.

\subsection{Discrete Korn inequality}
\label{sec:discrete.korn}

In nonconforming discretisations of elasticity, discrete Korn inequalities
are generally obtained by complementing the symmetric-gradient term with a
stabilisation acting directly on the discrete rotation of fibers instead of its (discrete) gradient. This approach is used, for instance, in piecewise $H_1$, HHO, VEM,
and HDG formulations; see
\cite{Brenner:04,Di-Pietro.Droniou:20,Kwak.Park:22,Qiu.Shen.Shi:18}.
Such stabilisation terms directly control the nonconformity of the discrete
rotation and the element-wise rigid-body modes. In contrast, a stabilisation acting only on the
degrees of freedom of the discrete gradient would require an additional
estimate showing that these gradient degrees of freedom control the nonconformity and the rigid-body components of the original unknown. Since this property does not follow directly from the standard discrete Korn inequalities, we follow here the usual strategy and introduce a stabilisation acting on the rotation degrees of freedom.

In order to prove the coercivity of the general scheme (Lemma~\ref{lem:coercivity}), we first establish the following discrete version of Korn's inequality for the DDR method.

\begin{proposition}[Discrete Korn inequality for DDR spaces]
\label{prop:korn}
For all $\dofh{\bvec{\phi}} \in \tXdRgrado{\Th}$, it holds
\begin{equation}
\label{eq:korn}
\norm{\VROT,h}{\tGRAD{h}\dofh{\bvec \phi}}^2 \lesssim \norm{L^2(\Omega)^{2\times 2}}{\tGshfull\dofh{\bvec \phi}}^2 + \sum_{T\in\Th}h^{-2}_Ts_{1,T}(\dofT{\bvec \phi},\dofT{\bvec \phi}).
\end{equation}
\end{proposition}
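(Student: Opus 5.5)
Our plan is to reduce \eqref{eq:korn} to a known discrete Korn inequality for a hybrid (HHO-type) space, namely \cite[Lemma~7.23]{Di-Pietro.Droniou:20}, by building a broken polynomial field from $\dofh{\bvec\phi}$ whose broken gradient, symmetric gradient and jumps are controlled by the DDR quantities.

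\emph{Step 1: norm equivalence.} We first bound the left-hand side by a broken $H_1$-type norm of the potential. Using the standard DDR estimates (boundedness of $\Prot{k+1}{T}$ and of the stabilisation $s_{\VROT,T}$ in terms of the components of $\tGRAD{T}\dofT{\bvec\phi}$), we expect
\[
\norm{\VROT,h}{\tGRAD{h}\dofh{\bvec\phi}}^2\lesssim \sum_{T\in\Th}\Big(\norm{L^2(T)^{2\times2}}{\tGT\dofT{\bvec\phi}}^2+\sum_{E\in\ET}h_E\norm{L^2(E)^{2}}{\tGE\dof{\bvec\phi}{E}}^2\Big).
\]
The edge terms are controlled by the element gradient plus the stabilisation: by a discrete trace/inverse inequality, $\tGE$ is the tangential derivative of the trace $\gammadR{\partial T}\dofT{\bvec\phi}$ (up to projection), and writing $\gammadR{\partial T}\dofT{\bvec\phi}=\Pgrad{k+2}{T}\dofT{\bvec\phi}+(\gammadR{\partial T}\dofT{\bvec\phi}-\Pgrad{k+2}{T}\dofT{\bvec\phi})$, an inverse inequality on $E$ gives $h_E\norm{L^2(E)}{\tGE\dof{\bvec\phi}{E}}^2\lesssim \norm{L^2(T)}{\GRAD\Pgrad{k+2}{T}\dofT{\bvec\phi}}^2+h_T^{-2}s_{1,T}(\dofT{\bvec\phi},\dofT{\bvec\phi})$. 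Similarly, integrating by parts in the definition of $\tGT$ shows $\norm{L^2(T)}{\tGT\dofT{\bvec\phi}-\GRAD\Pgrad{k+2}{T}\dofT{\bvec\phi}}^2\lesssim h_T^{-2}s_{1,T}$ (plus possibly a cell term $h_T^{-2}\norm{L^2(T)}{\bvec\phi_T-\vlproj{k}{T}\Pgrad{k+2}{T}\dofT{\bvec\phi}}^2$, which vanishes by the polynomial consistency/ projection property of the potential, cf.\ \cite{Di-Pietro.Droniou:23}). Hence it suffices to bound $\sum_T\norm{L^2(T)}{\GRAD\Pgrad{k+2}{T}\dofT{\bvec\phi}}^2$.

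\emph{Step 2: broken Korn.} Set $\bvec p_h\coloneq\Pgrad{k+2}{h}\dofh{\bvec\phi}\in\Poly{k+2}(\Th)^2$. A broken Korn inequality (Brenner \cite{Brenner:04}) gives
\[
\sum_T\norm{L^2(T)}{\GRAD\bvec p_h}^2\lesssim \sum_T\norm{L^2(T)}{\GRAD_s\bvec p_h}^2+\sum_{E\in\Eh}h_E^{-1}\norm{L^2(E)}{[\bvec p_h]_E}^2,
\]
with jumps on boundary edges equal to the trace. Jumps are split through the single-valued trace $\gammadR{\partial T}$ (continuous across interfaces since it is determined by edge and vertex unknowns, and zero on $\partial\Omega$ for $\dofh{\bvec\phi}\in\tXdRgrado{\Th}$): $h_E^{-1}\norm{L^2(E)}{[\bvec p_h]}^2\lesssim\sum_{T\ni E}h_T^{-2}s_{1,T}$. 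Finally, $\norm{L^2(T)}{\GRAD_s\bvec p_h}\le\norm{L^2(T)}{\tGsTfull\dofT{\bvec\phi}}+\norm{L^2(T)}{\tGT\dofT{\bvec\phi}-\GRAD\bvec p_h}$, and the last term is again bounded by the stabilisation as in Step~1.

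The main obstacle is the estimate $\norm{L^2(T)}{\tGT\dofT{\bvec\phi}-\GRAD\Pgrad{k+2}{T}\dofT{\bvec\phi}}^2\lesssim h_T^{-2}s_{1,T}$: the potential is only defined by testing against $\cRoly{k+3}(T)^2$, so the difference must be tested on the full $\Poly{k+1}(T)^{2\times2}$, and the $\Roly{}$-part requires care. I would first try to exploit that $\tGT$ and $\GRAD\Pgrad{k+2}{T}$ satisfy the same integration-by-parts identity with boundary trace $\gammadR{\partial T}$ versus $\Pgrad{k+2}{T}$, so their difference tested with $\bvec\zeta$ equals $\sum_E\omega_{TE}\int_E(\gammadR{\partial T}-\Pgrad{k+2}{T})\cdot\bvec\zeta\normal_E$ plus a cell term involving $\bvec\phi_T-\Pgrad{k+2}{T}$ against $\DIV\bvec\zeta$; bounding the cell term, if not zero, would need the relation $\vlproj{k}{T}\Pgrad{k+2}{T}\dofT{\bvec\phi}=\bvec\phi_T$ from \cite{Di-Pietro.Droniou:23}, else a further Poincaré-type argument.
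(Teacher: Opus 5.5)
Your proposal is correct and follows essentially the paper's route. The shared steps are: norm equivalence, with $\tGE$ identified as the tangential derivative of $\gammadR{\partial T}$; a broken Korn inequality for $\Pgrad{k+2}{h}\dofh{\bvec\phi}$, with jumps controlled through the single-valued trace; and control of $\tGT\dofT{\bvec\phi}-\GRAD\Pgrad{k+2}{T}\dofT{\bvec\phi}$ by $h_T^{-2}s_{1,T}$. The step you flag as the main obstacle closes at once. Testing \eqref{eq:def.potential.1} with $\cRoly{k+1}(T)^2\subset\cRoly{k+3}(T)^2$ and comparing with the definition of $\tGT$ gives $\vlproj{k}{T}\Pgrad{k+2}{T}\dofT{\bvec\phi}=\bvec\phi_T$, so the cell term vanishes. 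This is equivalent to the paper's extension of \eqref{eq:def.potential.1} to $\Poly{k+1}(T)^{2\times2}$ yielding \eqref{eq:int.by.part}, and testing that identity with $\bvec\zeta=\tGT\dofT{\bvec\phi}-\GRAD\Pgrad{k+2}{T}\dofT{\bvec\phi}$ gives your bound directly.
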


We split the proof of Proposition~\ref{prop:korn} into two steps. First, we show that, locally, the discrete gradient (resp.~symmetric gradient) is equivalent to the gradient (resp.~symmetric gradient) of the discrete potential, up to stabilisation; see Lemma~\ref{lem:equivalence.G.grad.P}. Then, we prove that these potential reconstructions satisfy a discrete Korn inequality; see Lemma~\ref{lem:korn.on.grad.P}. Proposition~\ref{prop:korn} is a direct consequence of Lemma~\ref{lem:equivalence.G.grad.P} and Lemma~\ref{lem:korn.on.grad.P}. 

\begin{lemma}[Equivalence between discrete gradients and gradients of reconstructed potentials]
\label{lem:equivalence.G.grad.P}
For all $T\in\Th$, and all $\dofT{\bvec{\phi}} \in \tXdRgrad{T}$, it holds
\begin{align}
\label{eq:gradient.equivalence}
    \norm{\VROT,T}{\tGRAD{T} \dofT{\bvec \phi}}^2 &{}\lesssim \norm{L^2(T)^{2\times 2}}{\GRAD \Pgrad{k+2}{T} \dofT{\bvec \phi}}^2 + h_T^{-2}s_{1,T}(\dofT{\bvec \phi},\dofT{\bvec \phi}), \\
\label{eq:gradient.sym.equivalence}
    \norm{L^2(T)^{2\times 2}}{\GRAD_s \Pgrad{k+2}{T} \dofT{\bvec \phi}}^2 + h_T^{-2} s_{1,T}(\dofT{\bvec \phi},\dofT{\bvec \phi}) &{}\lesssim \norm{L^2(T)^{2\times 2}}{ \tGsTfull \dofT{\bvec \phi}}^2 + h_T^{-2} s_{1,T}(\dofT{\bvec \phi},\dofT{\bvec \phi}).
\end{align}
\end{lemma}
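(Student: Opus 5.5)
The key identity I would use is obtained by integrating by parts the definition of $\tGT$. For every $\bvec\zeta\in\Poly{k+1}(T)^{2\times 2}$, using $\DIV\bvec\zeta\in\Poly{k}(T)^2$ and the fact that $\bvec v_T$ is tested against degree-$k$ polynomials, one can insert $\Pgrad{k+2}{T}\dofT{\bvec\phi}$ for $\bvec\phi_T$. This requires $\vlproj{k}{T}\Pgrad{k+2}{T}\dofT{\bvec\phi}=\bvec\phi_T$, which I expect to be part of the DDR potential's construction in the non-serendipity setting; see \cite{Di-Pietro.Droniou:23}. The result is
\[
\int_T (\tGT\dofT{\bvec\phi}-\GRAD\Pgrad{k+2}{T}\dofT{\bvec\phi}):\bvec\zeta=\sum_{E\in\ET}\omega_{TE}\int_E(\gammadR{\partial T}\dofT{\bvec\phi}-\Pgrad{k+2}{T}\dofT{\bvec\phi})\cdot(\bvec\zeta\normal_E).
\]
Take $\bvec\zeta=\tGT\dofT{\bvec\phi}-\GRAD\Pgrad{k+2}{T}\dofT{\bvec\phi}$. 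This is a polynomial of degree $k+1$, so it is an admissible test function. A discrete trace inequality $\|\bvec\zeta\|_{L^2(E)}\lesssim h_T^{-1/2}\|\bvec\zeta\|_{L^2(T)}$ and Cauchy--Schwarz, with $h_E\simeq h_T$ under mesh regularity, then give
\[
\norm{L^2(T)^{2\times2}}{\tGT\dofT{\bvec\phi}-\GRAD\Pgrad{k+2}{T}\dofT{\bvec\phi}}^2\lesssim h_T^{-2}s_{1,T}(\dofT{\bvec\phi},\dofT{\bvec\phi}).
\]
This is the central estimate, and both inequalities follow from it.

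For \eqref{eq:gradient.equivalence}, I would use that $\norm{\VROT,T}{\tGRAD{T}\dofT{\bvec\phi}}$ is equivalent to an $L^2$-like norm of its components. Standard DDR norm equivalence gives $\norm{\VROT,T}{\cdot}^2\simeq\|\bvec\tau_T\|^2_{L^2(T)}+\sum_E h_E\|\bvec\tau_E\|^2_{L^2(E)}$, where $\bvec\tau_T=\Rproj{k}{T}\tGT\dofT{\bvec\phi}+\Rcproj{k+1}{T}\tGT\dofT{\bvec\phi}$ and $\bvec\tau_E=\tGE\dofT{\bvec\phi}$. The element components are bounded by $\|\tGT\dofT{\bvec\phi}\|_{L^2(T)}$, which is controlled by the triangle inequality and the central estimate.

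The edge terms $\tGE\dof{\bvec\phi}{E}$ are the main obstacle. I would use that $\tGE\dof{\bvec\phi}{E}$ equals $\vlproj{k+1}{E}\partial_{\tangent_E}\gammadR{\partial T}\dofT{\bvec\phi}$, since the trace interpolates the vertex values and has $\Poly{k}$ projection $\bvec\phi_E$. Then I would write $\partial_{\tangent_E}\gammadR{\partial T}\dofT{\bvec\phi}=\partial_{\tangent_E}(\gammadR{\partial T}\dofT{\bvec\phi}-\Pgrad{k+2}{T}\dofT{\bvec\phi})+\GRAD\Pgrad{k+2}{T}\dofT{\bvec\phi}\,\tangent_E$. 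The first term is handled by an inverse inequality on $E$, giving $h_E\|\partial_{\tangent_E}(\cdot)\|_{E}^2\lesssim h_E^{-1}\|\cdot\|_E^2$, which is bounded by $h_T^{-2}s_{1,T}$. The second term is handled by a discrete trace inequality, giving $h_E\|\GRAD\Pgrad{k+2}{T}\dofT{\bvec\phi}\|_E^2\lesssim\|\GRAD\Pgrad{k+2}{T}\dofT{\bvec\phi}\|_T^2$.

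For \eqref{eq:gradient.sym.equivalence}, I would take the symmetric part of the central estimate. This gives $\|\GRAD_s\Pgrad{k+2}{T}\dofT{\bvec\phi}\|_{L^2(T)}\le\|\tGsTfull\dofT{\bvec\phi}\|_{L^2(T)}+\|\tGT\dofT{\bvec\phi}-\GRAD\Pgrad{k+2}{T}\dofT{\bvec\phi}\|_{L^2(T)}$, and the last term is bounded by $h_T^{-1}s_{1,T}^{1/2}$. Squaring and adding $h_T^{-2}s_{1,T}$ to both sides concludes the proof.
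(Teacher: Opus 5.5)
Your proof is correct and follows essentially the paper's route. Your key identity is exactly the paper's \eqref{eq:int.by.part}, and you treat the edge terms the same way, via $\tGE\dofT{\bvec\phi}=\partial_{\tangent_E}\gammadR{\partial T}\dofT{\bvec\phi}$, a discrete trace inequality and an inverse inequality on $E$. There are two small differences, neither affecting validity:
\begin{itemize}
\item The paper obtains the identity by extending \eqref{eq:def.potential.1} to test functions in $\Poly{k+1}(T)^{2\times 2}$ via \cite[Remark~17]{Di-Pietro.Droniou:23}. Given the definition of $\tGT$, that extension is equivalent to the property $\vlproj{k}{T}\Pgrad{k+2}{T}\dofT{\bvec\phi}=\bvec\phi_T$ you invoke, so your assumption indeed holds.
\item The paper tests separately with $\tGT\dofT{\bvec\phi}$ and with $\GRAD_s\Pgrad{k+2}{T}\dofT{\bvec\phi}$. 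Your single choice $\bvec\zeta=\tGT\dofT{\bvec\phi}-\GRAD\Pgrad{k+2}{T}\dofT{\bvec\phi}$ gives one estimate from which both bounds follow by the triangle inequality, which is slightly more streamlined.
\end{itemize}
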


\begin{proof}
We first prove \eqref{eq:gradient.equivalence}. 
By the norm equivalence \cite[Lemma~5]{Di-Pietro.Droniou:23} and the $1$-Lipschitz property of the $L^2$-projectors $\Rproj{k}{T}$ and $\Rcproj{k+1}{T}$, we have 
\begin{equation}\label{eq:norm.equiv.rot}
\norm{\VROT,T}{\tGRAD{T} \dofT{\bvec \phi}}^2\lesssim \norm{L^2(T)^{2\times 2}}{\tGTfull \dofT{\bvec \phi}}^2+\sum_{E\in\ET}h_T\norm{L^2(E)^2}{\tGE \dofT{\bvec{\phi}}}^2.
\end{equation}
We therefore have to bound each term in the right-hand side of this estimate.

By \cite[Remark~17]{Di-Pietro.Droniou:23}, relation \eqref{eq:def.potential.1} extends to test functions in $\Poly{k+1}(T)^{2\times 2}$. Integrating by parts then yields, for all $\bvec\xi\in\Poly{k+1}(T)^{2\times 2}$,
    \begin{align}
    \label{eq:int.by.part}
     \int_T \tGTfull  \dofT{\bvec \phi} : \bvec \xi = \int_T \GRAD \Pgrad{k+2}{T} \dofT{\bvec \phi} : \bvec \xi  + \sum_{E \in \ET} \omega_{TE} \int_E (\gammadR{\partial T} \dofT{\bvec \phi}-\Pgrad{k+2}{T} \dofT{\bvec \phi} )\cdot (\bvec \xi \normal_E).
    \end{align}
We then take $\bvec \xi = \tGTfull \dofT{\bvec \phi}$ and apply the Cauchy--Schwarz inequality together with a discrete trace inequality \cite[Lemma~1.32]{Di-Pietro.Droniou:20}. Squaring the resulting estimate gives
    \begin{align}
    \norm{L^2(T)^{2\times 2}}{\tGTfull \dofT{\bvec \phi}}^2 &{}\lesssim \norm{L^2(T)^{2\times 2}}{\GRAD\Pgrad{k+2}{T} \dofT{\bvec \phi}}^2 + \sum_{E\in\ET} h^{-1}_E \norm{L^2(E)^2}{\gammadR{\partial T} \dofT{\bvec \phi}-\Pgrad{k+2}{T} \dofT{\bvec \phi}}^2\nonumber\\
     &{}\lesssim  \norm{L^2(T)^{2\times 2}}{\GRAD \Pgrad{k+2}{T} \dofT{\bvec \phi}}^2 + h_T^{-2} s_{1,T}(\dofT{\bvec \phi},\dofT{\bvec \phi}),
     \label{eq:norm.GT.norm.nablaPT.1}
    \end{align}
The conclusion follows directly from the definition of the stabilisation $s_{1,T}$ \eqref{eq:def.s1T}.
Moreover, replacing $(\bvec{v}_E,(\bvec{v}_V)_{V\in\VE})$ in the right-hand side of \eqref{eq:def.tGrad.E} with $(\gammadR{\partial T}\dofT{\bvec{v}},(\gammadR{\partial T}\dofT{\bvec{v}}(\bvec{x}_V))_{V\in\VE})$ (which is valid by definition of $\gammadR{\partial T}$ and since $\partial_{\tangent_E}\bvec{w}\in \Poly{k}(E)$), we notice that $\tGE \dofT{\bvec{v}}=\partial_{\tangent_E}\gammadR{\partial T} \dofT{\bvec{v}}$. Hence,
    \begin{align}
        h_T\norm{L^2(E)^2}{\tGE \dofT{\bvec{\phi}}}^2 &{}= h_T\norm{L^2(E)^2}{\partial_{\tangent_E}\gammadR{\partial T} \dofT{\bvec{\phi}} }^2 \nonumber\\
        &{} \lesssim \norm{L^2(T)^{2\times 2}}{\GRAD \Pgrad{k+2}{T} \dofT{\bvec \phi}}^2 +h_T\norm{L^2(E)^2}{\partial_{\tangent_E}\gammadR{\partial T} \dofT{\bvec{\phi}} -(\GRAD \Pgrad{k+2}{T} \dofT{\bvec \phi})\tangent_E}^2 \nonumber\\
        &{} \lesssim \norm{L^2(T)^{2\times 2}}{\GRAD \Pgrad{k+2}{T} \dofT{\bvec \phi}}^2 +h_T^{-1}\norm{L^2(E)^2}{\gammadR{\partial T} \dofT{\bvec{\phi}}  - \Pgrad{k+2}{T} \dofT{\bvec \phi}}^2 \nonumber\\
        &{}  \lesssim \norm{L^2(T)^{2\times 2}}{\GRAD \Pgrad{k+2}{T} \dofT{\bvec \phi}}^2 + h_T^{-2} s_{1,T}(\dofT{\bvec \phi},\dofT{\bvec \phi}).
        \label{eq:norm.GT.norm.nablaPT.2}
    \end{align}
Here, we have used the triangle inequality together with a discrete trace inequality in the first inequality, followed by a discrete inverse inequality \cite[Lemma~1.28]{Di-Pietro.Droniou:20}. Plugging \eqref{eq:norm.GT.norm.nablaPT.1} and \eqref{eq:norm.GT.norm.nablaPT.2} into \eqref{eq:norm.equiv.rot} concludes the proof of \eqref{eq:gradient.equivalence}.

We now prove \eqref{eq:gradient.sym.equivalence}. By taking test functions $\bvec \xi_s \in \Poly{k+1}(T)^{2 \times 2}_s$ (the symmetric part of the space $\Poly{k+1}(T)^{2 \times 2}$)  in \eqref{eq:int.by.part}, we obtain the following relation between the discrete symmetric gradient and the symmetric part of the gradient of the discrete potential: for all $\bvec \xi_s \in \Poly{k+1}(T)^{2 \times 2}_s$,
\[
    \int_T \GRAD_s \Pgrad{k+2}{T} \dofT{\bvec \phi} : \bvec\xi_s= 
     \int_T \tGsTfull \dofT{\bvec \phi} : \bvec \xi_s   - \sum_{E \in \ET} \omega_{TE} \int_E (\gammadR{\partial T} \dofT{\bvec \phi}-\Pgrad{k+2}{T} \dofT{\bvec \phi} )\cdot (\bvec \xi_s \normal_E).
\]
The remainder of the proof follows by adapting the arguments of the previous part, taking $\bvec \xi_s=\GRAD_s\Pgrad{k+2}{T} \dofT{\bvec \phi}$ and using the same discrete trace and inverse inequalities.
\end{proof}

\begin{lemma}[Korn inequality for the discrete potential]
\label{lem:korn.on.grad.P}
For all $\dofh{\bvec \phi} \in \tXdRgrado{\Th}$, it holds
\begin{equation*}
    \norm{L^2(\Th)^{2\times 2}}{\GRAD \Pgrad{k+2}{h} \dofh{\bvec \phi}}^2 + \sum_{T\in\Th}h^{-2}_Ts_{1,T}(\dofT{\bvec \phi},\dofT{\bvec \phi})\lesssim \norm{L^2(\Th)^{2\times 2}}{\GRAD_s \Pgrad{k+2}{h} \dofh{\bvec \phi}}^2 + \sum_{T\in\Th}h^{-2}_Ts_{1,T}(\dofT{\bvec \phi},\dofT{\bvec \phi})
\end{equation*}
\end{lemma}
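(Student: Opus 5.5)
The plan is to reduce the claim to a known discrete Korn inequality for broken polynomial fields with jump penalisation, in the spirit of \cite{Brenner:04} and \cite[Lemma~7.23]{Di-Pietro.Droniou:20}. Set $\bvec w_h\coloneq\Pgrad{k+2}{h}\dofh{\bvec\phi}\in\Poly{k+2}(\Th)^2$, a broken piecewise polynomial vector field. Brenner's inequality for piecewise $H_1$ fields reads
\[
\norm{L^2(\Th)^{2\times2}}{\GRAD_h\bvec w_h}^2\lesssim\norm{L^2(\Th)^{2\times2}}{\GRAD_{s,h}\bvec w_h}^2+\sum_{E\in\Eh}h_E^{-1}\norm{L^2(E)^2}{\lproj{1}{E}[\bvec w_h]_E}^2,
\]
where $[\cdot]_E$ denotes the jump across interior edges and the trace on boundary edges. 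This form uses the boundary condition to exclude global rigid motions and needs only a low-order projection of the jumps. It therefore suffices to bound the jump term by $\sum_T h_T^{-2}s_{1,T}(\dofT{\bvec\phi},\dofT{\bvec\phi})$. The stabilisation term already appears on both sides of the statement, so nothing else needs to be controlled.

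The key observation is that the trace $\gammadR{\partial T}\dofT{\bvec\phi}$ is single-valued across elements. On each edge $E$ it is the unique element of $\Poly{k+2}(E)^2$ determined by the shared unknowns $\bvec\phi_E$ and $(\bvec\phi_V)_{V\in\VE}$: these fix the $\Poly{k}$-moments and the two endpoint values, which is exactly $\dim\Poly{k+2}(E)$ conditions. For an interior edge $E$ shared by $T_1$ and $T_2$, write $\gamma_E$ for this common trace. Then
\[
[\bvec w_h]_E=(\Pgrad{k+2}{T_1}\dofT{\bvec\phi}-\gamma_E)-(\Pgrad{k+2}{T_2}\dofT{\bvec\phi}-\gamma_E).
\]
On a boundary edge we have $\gamma_E=0$, since $\dofh{\bvec\phi}\in\tXdRgrado{\Th}$. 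In both cases the triangle inequality gives
\[
h_E^{-1}\norm{L^2(E)^2}{[\bvec w_h]_E}^2\lesssim\sum_{T\ni E}h_E^{-1}\norm{L^2(E)^2}{\Pgrad{k+2}{T}\dofT{\bvec\phi}-\gammadR{\partial T}\dofT{\bvec\phi}}^2.
\]
By mesh regularity $h_E\simeq h_T$, so $h_E^{-1}\simeq h_E\,h_T^{-2}$ and the right-hand side is bounded by $h_T^{-2}s_{1,T}(\dofT{\bvec\phi},\dofT{\bvec\phi})$, by the definition \eqref{eq:def.s1T}. Summing over edges, and using that each element has a uniformly bounded number of edges, bounds the full jump term. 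Combining this with Brenner's inequality and adding the stabilisation to both sides yields the lemma.

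The main obstacle is invoking the broken Korn inequality in a form valid on general polygonal meshes under the regularity assumption \cite[Assumption~7.6]{Di-Pietro.Droniou:20}, including edges subdivided by hanging nodes. I would cite \cite[Lemma~7.23]{Di-Pietro.Droniou:20}, or Brenner's result applied on the matching simplicial submesh, on which $\bvec w_h$ is still piecewise polynomial. On that submesh, the jumps across the added internal edges of each element vanish, so only the original mesh edges contribute. I would also check that the unweighted full $L^2$ jump version holds, since it dominates the projected version $\lproj{1}{E}[\bvec w_h]_E$ in any case.
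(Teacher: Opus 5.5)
Your proposal is correct and follows essentially the same route as the paper: apply the broken-polynomial Korn inequality \cite[Lemma~7.23]{Di-Pietro.Droniou:20} with $\ell=k+2$ to $\Pgrad{k+2}{h}\dofh{\bvec\phi}$, then bound each edge jump by inserting the single-valued trace $\gammadR{\partial T}\dofT{\bvec\phi}$ and using the definition of $s_{1,T}$. Your treatment of boundary edges is also right, and in fact necessary to exclude global rigid rotations: there the trace vanishes because the boundary unknowns are zero. The paper's proof leaves this step implicit, since it writes the jump sum only over $\Ehi$.
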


\begin{proof}
We generalise the proof of \cite[Theorem 5.7]{Droniou.Haidar.ea:25}, which establishes the inequality for $k=0$, to arbitrary polynomial degree. We first apply \cite[Lemma 7.23]{Di-Pietro.Droniou:20}, with $\ell=k+2$, to get
    \[
    \norm{L^2(\Th)^{2\times 2}}{\GRAD \Pgrad{k+2}{h} \dofh{\bvec \phi}}^2 \lesssim \norm{L^2(\Th)^{2\times 2}}{\GRAD_s \Pgrad{k+2}{h} \dofh{\bvec \phi}}^2 + \sum_{E \in \Ehi} h_E^{-1}\norm{L^2(E)^2}{\llbracket \Pgrad{k+2}{h} \dofh{\bvec \phi}  \rrbracket_{E}}^2,
    \]
where, for any interior edge $E \in \Ehi$ shared by two cells $T_1,T_2 \in \Th$, and for any broken function $\bvec v$ with
well-defined traces on $E$, we set
\[
\llbracket \bvec v \rrbracket_E
\coloneq
(\bvec v_{\vert T_1})_{\vert E}
-
(\bvec v_{\vert T_2})_{\vert E}.
\]
Here, $T_1$ and $T_2$ are the two cells neighbouring $E$ (the order of which is irrelevant).
We then bound the term $\sum_{E \in \Ehi} h_E^{-1}\norm{L^2(E)^2}{\llbracket \Pgrad{k+2}{h} \dofh{\bvec \phi} \rrbracket_{E}}^2$. For $E\in\Ehi$, again denoting by $T_1$ and $T_2$ the two cells sharing $E$ as an edge, we note that $(\gammadR{\partial T_1}\underline{\bvec{\phi}}_{T_1})_{|E}=(\gammadR{\partial T_2}\underline{\bvec{\phi}}_{T_2})_{|E}$ since these functions only depend on the degrees of freedom of $\dofT{\bvec \phi}$ on $E$ and $\VE$.
We can therefore write
    \begin{align*}
        \norm{L^2(E)^2}{\llbracket \Pgrad{k+2}{h} \dofh{\bvec \phi} \rrbracket_{E}}^2 &{}=  \norm{L^2(E)^2}{ (\Pgrad{k+2}{T_1} \dofh{\bvec \phi})_{|E} - (\Pgrad{k+2}{T_2} \dofh{\bvec \phi})_{|E}}^2 \\
        &{} \leq 2\norm{L^2(E)^2}{ (\Pgrad{k+2}{T_1} \dofh{\bvec \phi}- \gammadR{\partial T_1}\dofh{\bvec \phi})_{|E}}^2 + 2\norm{L^2(E)^2}{ (\Pgrad{k+2}{T_2} \dofh{\bvec \phi}- \gammadR{\partial T_2}\dofh{\bvec \phi})_{|E}}^2\\
        &{} \lesssim h_{T_1}^{-1} s_{1,T_1}(\underline{\bvec \phi}_{T_1},\underline{\bvec \phi}_{T_1}) + h_{T_2}^{-1}s_{1,T_2}(\underline{\bvec \phi}_{T_2},\underline{\bvec \phi}_{T_2}).
    \end{align*}
Summing over all interior edges completes the proof.
\end{proof}

\subsection{Coercivity}

\begin{lemma}[Discrete Poincaré--Korn inequality]
For all $\dofh{\bvec \phi} \in \tXdRgrado{\Th}$, it holds
\begin{equation}
\label{eq:pc-korn}
    \norm{1,h}{\dofh{\bvec \phi}}^2 \lesssim  \norm{L^2(\Omega)^{2\times 2}}{\tGshfull\dofh{\bvec  \phi}}^2 + \sum_{T\in\Th}h_T^{-2}s_{1,T}(\dofT{\bvec \phi},\dofT{\bvec \phi})
\end{equation}
\end{lemma}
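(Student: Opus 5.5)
The plan is to combine the discrete Korn inequality of Proposition~\ref{prop:korn} with a discrete Poincaré inequality for the DDR space $\tXdRgrado{\Th}$. The norm $\norm{1,h}{\cdot}$ is the one induced by the scalar product $(\cdot,\cdot)_{1,h}$ in \eqref{eq:def.sp.grad}. It therefore suffices to show
\[
\norm{1,h}{\dofh{\bvec\phi}}^2\lesssim \norm{\VROT,h}{\tGRAD{h}\dofh{\bvec \phi}}^2\qquad\forall\dofh{\bvec\phi}\in\tXdRgrado{\Th}.
\]
Inserting \eqref{eq:korn} into the right-hand side then yields \eqref{eq:pc-korn}.

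The Poincaré-type bound will be obtained componentwise. The space $\tXdRgrado{\Th}$ is the tensorisation of the scalar DDR space $\underline{H}^{k+1}_{1,0}(\Th)$ with homogeneous boundary values, and $\tGRAD{h}$ acts row by row. Likewise $(\cdot,\cdot)_{1,h}$ and $(\cdot,\cdot)_{\VROT,h}$ are the tensorised versions of the scalar $H_1$ and $\bvec H_{\ROT}$ DDR scalar products. I will invoke the discrete Poincaré inequality for the first space of the DDR complex with boundary conditions, as in \cite{Di-Pietro.Droniou:23}: for scalar $\underline{q}_h$ with vanishing boundary unknowns,
\[
\norm{1,h}{\underline q_h}\lesssim \norm{\ROT,h}{\underline{G}^{k+1}_h\underline q_h}.
\]
This follows from the cohomology of the discrete complex with boundary conditions, namely injectivity of the discrete gradient on the space with zero boundary values, combined with a uniform stability (Poincaré) argument. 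Applying it to each of the two components and summing gives the bound above.

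The main obstacle is making sure this scalar Poincaré inequality is available at order $k+1$, for the non-serendipity space, with homogeneous boundary conditions. If a direct reference is unavailable, I would argue through potentials. First, norm equivalence gives $\norm{1,h}{\dofh{\bvec\phi}}^2\simeq \norm{L^2(\Omega)^2}{\Pgrad{k+2}{h}\dofh{\bvec\phi}}^2+s_{1,h}(\dofh{\bvec\phi},\dofh{\bvec\phi})$. The stabilisation term is bounded by $h^2\sum_T h_T^{-2}s_{1,T}$, which is already controlled. For the $L^2$ norm of the potential, I would apply the broken Poincaré inequality for piecewise $H_1$ functions, \cite[Lemma~5.2 or 7.23-type]{Di-Pietro.Droniou:20}. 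This bounds it by $\norm{L^2(\Th)^{2\times2}}{\GRAD\Pgrad{k+2}{h}\dofh{\bvec\phi}}$ plus interior jumps and boundary traces scaled by $h_E^{-1}$. The interior jumps are bounded by $h_T^{-1}s_{1,T}$ exactly as in the proof of Lemma~\ref{lem:korn.on.grad.P}. On boundary edges, $\gammadR{\partial T}\dofT{\bvec\phi}$ vanishes because $\bvec\phi_E=\bvec0$ and $\bvec\phi_V=\bvec0$ there. Hence the boundary trace of $\Pgrad{k+2}{T}\dofT{\bvec\phi}$ is also controlled by $s_{1,T}$.

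Finally, Lemma~\ref{lem:korn.on.grad.P} bounds $\GRAD\Pgrad{k+2}{h}$ by $\GRAD_s\Pgrad{k+2}{h}$ plus stabilisation. Then \eqref{eq:gradient.sym.equivalence} bounds $\GRAD_s\Pgrad{k+2}{h}$ by $\tGshfull$ plus stabilisation. Together these close the argument without passing through $\norm{\VROT,h}{\cdot}$, which avoids the scalar Poincaré reference altogether.
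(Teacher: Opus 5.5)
Your primary route, a tensorised discrete Poincaré inequality $\norm{1,h}{\dofh{\bvec\phi}}\lesssim\norm{\VROT,h}{\tGRAD{h}\dofh{\bvec\phi}}$ on the space with homogeneous boundary values followed by the discrete Korn inequality \eqref{eq:korn}, is exactly the paper's proof; the only difference is that the paper takes the scalar Poincaré inequality from \cite[Lemma~7]{Di-Pietro:24}. Your fallback argument is also correct: it applies a broken Poincaré inequality to $\Pgrad{k+2}{h}\dofh{\bvec\phi}$, controls jumps and boundary traces by $s_{1,T}$ (since $\gammadR{\partial T}\dofT{\bvec\phi}$ vanishes on boundary edges), and then uses Lemma~\ref{lem:korn.on.grad.P} and \eqref{eq:gradient.sym.equivalence}, so it avoids any DDR-specific Poincaré reference.
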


\begin{proof}
This is a direct consequence of a tensorised version of the Poincaré inequality
\cite[Lemma~7]{Di-Pietro:24}, combined with the discrete Korn inequality
\eqref{eq:korn}.
\end{proof}

\begin{lemma}[Coercivity]
\label{lem:coercivity}
For all $(\dofh{\bvec{\phi}},\dofh{v})\in \tXdRgrado{\Th} \times \XSgrado{\Th}$, defining $A_h$ by \eqref{eq:def.Ah} with \eqref{eq:bh.hl} and \eqref{eq:ah.modified}, it holds
\begin{equation*}
    \norm{1\times 2,h}{(\dofh{\bvec \phi}, \dofh{v})}^2 \lesssim A_h((\dofh{\bvec \phi}, \dofh{v}),(\dofh{\bvec \phi}, \dofh{v}))
\end{equation*}
\end{lemma}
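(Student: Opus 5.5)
We expand $A_h((\dofh{\bvec\phi},\dofh{v}),(\dofh{\bvec\phi},\dofh{v}))$ using \eqref{eq:ah.modified} and \eqref{eq:bh.hl}. It equals
\[
(\bvec C\tGshfull\dofh{\bvec\phi},\tGshfull\dofh{\bvec\phi})_{L^2(\Omega)^{2\times2}}+\beta_0\sum_{T\in\Th}h_T^{-2}s_{1,T}(\dofT{\bvec\phi},\dofT{\bvec\phi})+\frac{\kappa}{t^2}\norm{1,h}{\dofh{\bvec\phi}-\SGRAD{h}\dofh{v}}^2 .
\]
We compare this with the terms of \eqref{eq:def.norm.energy}.

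\textbf{Step 1: the first three terms.} The second and third terms appear identically in the energy norm. For the first term, $\bvec C\bvec A=\beta_0\bvec A+\beta_1\tr(\bvec A)\Id$ for symmetric $\bvec A$, so $\bvec C\bvec A:\bvec A=\beta_0|\bvec A|^2+\beta_1(\tr\bvec A)^2\ge\beta_0|\bvec A|^2$. Moreover $|\bvec C\bvec A|^2\le(\beta_0+2\beta_1)^2|\bvec A|^2$. Hence $\norm{L^2(\Omega)^{2\times 2}}{\bvec C\tGshfull\dofh{\bvec\phi}}^2\lesssim(\bvec C\tGshfull\dofh{\bvec\phi},\tGshfull\dofh{\bvec\phi})$. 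The hidden constant depends only on the material coefficients $\beta_0,\beta_1$, which the $\lesssim$ convention allows, and not on $t$.

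\textbf{Step 2: the $\mu$-weighted terms.} By the discrete Poincaré--Korn inequality \eqref{eq:pc-korn}, and since $\beta_0|\bvec A|^2\le \bvec C\bvec A:\bvec A$,
\[
\mu\norm{1,h}{\dofh{\bvec\phi}}^2\lesssim \beta_0\Big(\norm{L^2(\Omega)^{2\times 2}}{\tGshfull\dofh{\bvec\phi}}^2+\sum_{T\in\Th}h_T^{-2}s_{1,T}(\dofT{\bvec\phi},\dofT{\bvec\phi})\Big)\lesssim a_h(\dofh{\bvec\phi},\dofh{\bvec\phi}),
\]
using $\mu\le\beta_0$. For the displacement part, the triangle inequality gives
\[
\mu\norm{1,h}{\SGRAD{h}\dofh{v}}^2\le 2\mu\norm{1,h}{\dofh{\bvec\phi}-\SGRAD{h}\dofh{v}}^2+2\mu\norm{1,h}{\dofh{\bvec\phi}}^2 .
\]
The second term is controlled exactly as above. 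For the first, we use $\mu\le\kappa$ together with the assumption that the thickness is bounded, say $t\le t_{\max}$ (implicitly $t\lesssim1$; this is the point to make explicit). Then $\mu\le\kappa\, t_{\max}^2/t^2$, so this term is bounded by a multiple of $b_h((\dofh{\bvec\phi},\dofh{v}),(\dofh{\bvec\phi},\dofh{v}))$.

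\textbf{Step 3: summation and the main point.} Summing Steps 1 and 2 gives the claim, with constants independent of $h$ and $t$. The delicate point is Step 2 for $\SGRAD{h}\dofh{v}$. There we must ensure that no negative power of $t$ enters the hidden constant, which is why the bound relies on $t$ being bounded above. All the genuine analytical difficulty is already contained in \eqref{eq:pc-korn}, which in turn rests on Proposition~\ref{prop:korn}.
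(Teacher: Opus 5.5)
Your proof is correct and follows the same route as the paper, which simply adapts the coercivity argument of \cite[Lemma~3]{Di-Pietro.Droniou:22} with \eqref{eq:pc-korn} in place of the Korn inequality used there. In that argument the strain, stabilisation and shear terms are matched directly, $\mu\norm{1,h}{\dofh{\bvec\phi}}^2$ is controlled via \eqref{eq:pc-korn} and $\mu\le\beta_0$, and $\mu\norm{1,h}{\SGRAD{h}\dofh{v}}^2$ via the triangle inequality and $\mu\le\kappa$. Your observation that this last step needs the thickness to be bounded above (e.g.\ $t\lesssim 1$) is right and is only implicit in the paper: without it the estimate cannot be uniform in $t$, as taking $\dofh{\bvec\phi}=\underline{\bvec 0}_h$ and letting $t\to\infty$ shows.
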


\begin{proof}
The proof is a straightforward adaptation of \cite[Lemma~3]{Di-Pietro.Droniou:22}, using
\eqref{eq:pc-korn} instead of \cite[Eq.~(28)]{Di-Pietro.Droniou:22}.
\end{proof}

We are now ready to state and prove the convergence estimate for the scheme \eqref{eq:weak.rm} with the bilinear forms \eqref{eq:def.Ah}, \eqref{eq:bh.hl}, \eqref{eq:ah.modified}.

\begin{theorem}[Convergence]
\label{th:convergence}
Let $(\bvec \psi, u) \in \HSobz$ and $(\dofh{\bvec{\psi}},\dofh{u})\in \tXdRgrado{\Th} \times \XSgrado{\Th}$ be the solutions to problems \eqref{eq:weak.rm.cont} and \eqref{eq:weak.rm} (with the definitions \eqref{eq:def.Ah}, \eqref{eq:bh.hl}, \eqref{eq:ah.modified}), respectively. Assuming the additional regularity $u\in \Cspace{1}(\overline{\Omega}) \cap H_{k+4}(\Th)$ and $\bvec{\psi} \in H_{k+3}(\Th)^2$, it holds
\begin{multline*}
        \norm{1\times2,h}{(\dofh{\bvec{\psi}}-\IdRgrad\bvec{\psi},\dofh{u}-\ISgrad u)} 
        \\
        \lesssim h^{k+1}\mu^{-1/2} \seminorm{H_{k+1}(\Th)^2}{\bvec \gamma}  
        + h^{k+2}\beta_0^{-1/2}\seminorm{H_{k+2}(\Th)^{2\times 2}}{\bvec C \GRAD \bvec \psi}
        + h^{k+3}\kappa^{-1/2}t\seminorm{H_{k+3}(\Th)^2}{\bvec \gamma}.
\end{multline*}
\end{theorem}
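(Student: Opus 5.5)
We follow the standard third Strang lemma approach for fully discrete polytopal schemes, as in \cite[Theorem~4]{Di-Pietro.Droniou:22}. Set $(\dofh{\bvec\epsilon},\dofh{e}) \coloneq (\dofh{\bvec\psi}-\IdRgrad\bvec\psi,\dofh{u}-\ISgrad u)\in\tXdRgrado{\Th}\times\XSgrado{\Th}$; these lie in the discrete spaces with boundary conditions because the interpolators preserve vanishing traces. By Lemma~\ref{lem:coercivity} and the scheme \eqref{eq:weak.rm},
\[
\norm{1\times2,h}{(\dofh{\bvec\epsilon},\dofh{e})}^2\lesssim A_h((\dofh{\bvec\epsilon},\dofh{e}),(\dofh{\bvec\epsilon},\dofh{e}))=\mathcal{E}_h((\bvec\psi,u),(\dofh{\bvec\epsilon},\dofh{e})),
\]
where the consistency error is
$\mathcal{E}_h((\bvec\psi,u),(\dofh{\bvec\phi},\dofh{v}))\coloneq l_h(\dofh{v})-A_h((\IdRgrad\bvec\psi,\ISgrad u),(\dofh{\bvec\phi},\dofh{v}))$.
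It therefore suffices to bound $\sup\mathcal{E}_h/\norm{1\times2,h}{\cdot}$ by the right-hand side.

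To split $\mathcal{E}_h$, use the strong equations \eqref{eq:strong.rm}: $f=-\DIV\bvec\gamma$ and $\bvec\gamma=-\DIV(\bvec C\GRAD\bvec\psi)$, with $\bvec\gamma=\frac{\kappa}{t^2}(\GRAD u-\bvec\psi)$. Write $l_h(\dofh{v})=-(\DIV\bvec\gamma,\SPgradh\dofh v)$ and use the commutation property $\SGRAD{h}\ISgrad u=\IdRgrad\GRAD u$ from \cite{Di-Pietro.Droniou.ea:26}. This turns the shear term into $-(\IdRgrad\bvec\gamma,\dofh{\bvec\phi}-\SGRAD{h}\dofh v)_{1,h}\cdot t^2/\kappa\cdot\kappa/t^2$. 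Then add and subtract $(\bvec\gamma,\Pgrad{k+2}{h}\dofh{\bvec\phi})$ and $(\bvec\gamma,\Pgrad{k+2}{h}\SGRAD{h}\dofh v)$. This yields three pieces.

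\textbf{(i) Bending piece.} This is
$-(\DIV(\bvec C\GRAD\bvec\psi),\Pgrad{k+2}{h}\dofh{\bvec\phi})-(\bvec C\tGshfull\IdRgrad\bvec\psi,\tGshfull\dofh{\bvec\phi})-\beta_0\sum_T h_T^{-2}s_{1,T}(\IdRgradT\bvec\psi,\dofT{\bvec\phi})$.
We integrate by parts elementwise and invoke the adjoint consistency of the discrete gradient, i.e.\ the tensorised order-$(k+1)$ analogue of \cite[Theorem~10]{Di-Pietro.Droniou:23}. We also use the polynomial consistency $\tGTfull\IdRgradT\bvec\psi\approx\vlproj{k+1}{T}\GRAD\bvec\psi$ and the stabilisation estimate \cite[Eq.~(6.8)]{Di-Pietro.Droniou:23} with the $h_T^{-2}$ scaling. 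Together these give $h^{k+2}\seminorm{H_{k+2}}{\bvec C\GRAD\bvec\psi}$ times $\big(\norm{L^2}{\bvec C\tGshfull\dofh{\bvec\phi}}^2+\beta_0\sum h_T^{-2}s_{1,T}\big)^{1/2}$, which produces the $\beta_0^{-1/2}$ factor. The $(k+2)$ order requires $\bvec\psi\in H_{k+3}$.

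\textbf{(ii) Shear consistency piece.} This is the difference between $(\bvec\gamma,\Pgrad{k+2}{h}\dofh{\bvec w})$ and $(\IdRgrad\bvec\gamma,\dofh{\bvec w})_{1,h}$ with $\dofh{\bvec w}=\dofh{\bvec\phi}-\SGRAD{h}\dofh v$. The $L^2$-product consistency of $(\cdot,\cdot)_{1,h}$ bounds it by $h^{k+3}\seminorm{H_{k+3}}{\bvec\gamma}\norm{1,h}{\dofh{\bvec w}}$. Since $\norm{1,h}{\dofh{\bvec w}}\le t\kappa^{-1/2}\norm{1\times2,h}{\cdot}$, this gives the last term.

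\textbf{(iii) Remaining piece.} This is $(\bvec\gamma,\Pgrad{k+2}{h}\dofh{\bvec\phi})-(\bvec\gamma,\Pgrad{k+2}{h}\SGRAD{h}\dofh v)-(\DIV\bvec\gamma,\SPgradh\dofh v)$, minus the bending data already accounted for. The $\bvec\phi$ part cancels with the term $-(\DIV(\bvec C\GRAD\bvec\psi),\cdot)$ moved into (i). For the $v$ part we need a discrete integration-by-parts formula linking $\SPgradh$, $\Pgrad{k+2}{h}\SGRAD{h}$ and $\DIV\bvec\gamma$, which is an adjoint consistency of $\SGRAD{h}$. By the definition \eqref{eq:def.potential.2}, Remark~\ref{rem:rewriting.pot}, and the continuity of the traces $\gammaS$ across edges (clamped on the boundary), this reduces to terms $(\bvec\gamma-\vlproj{k}{T}\bvec\gamma,\Pgrad{k+2}{T}\SGRAD{T}\dofT v-\nablaT\dofT v)$ plus edge terms with $\bvec\gamma\cdot\normal_E-\lproj{k}{E}(\bvec\gamma\cdot\normal_E)$. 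These are bounded by $h^{k+1}\seminorm{H_{k+1}}{\bvec\gamma}\norm{1,h}{\SGRAD{h}\dofh v}\le h^{k+1}\mu^{-1/2}\seminorm{H_{k+1}}{\bvec\gamma}\norm{1\times2,h}{\cdot}$.

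Piece (iii) is the main obstacle. Controlling the mismatch between the two potentials $\SPgrad$ and $\Pgrad{k+2}{T}\circ\SGRAD{T}$, and the associated boundary terms, in terms of $\norm{1,h}{\SGRAD{h}\dofh v}$ requires the exact compatibility between the $H_2$-type and $H_1$-type reconstructions. I would first attempt this through the DDR-type adjoint consistency of \cite{Di-Pietro.Droniou.ea:26.1}. Summing (i)--(iii) and dividing by the energy norm concludes the proof.
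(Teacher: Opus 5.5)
Your architecture is the paper's. You use the third Strang lemma with Lemma~\ref{lem:coercivity}, and you split the consistency error into three pieces: a bending piece (the paper's $\mathcal{E}_{\GRAD_s,h}$), a shear-product piece (the paper's $\mathcal{I}$, bounded via the consistency of $\Pgrad{k+2}{T}$ and $s_{1,T}$ together with the factor $t\kappa^{-1/2}$), and a displacement piece (the paper's $\mathcal{E}_{\GRAD,h}$). Pieces (i) and (ii) are fine.

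The gap is (iii). You assert its bound but leave it as an open ``main obstacle''. The edge terms you describe are not the ones that arise. The tool you propose, an adjoint consistency result from \cite{Di-Pietro.Droniou.ea:26.1}, is not what closes it. No comparison between $\SPgrad$ and $\Pgrad{k+2}{T}\circ\SGRAD{T}$ is needed beyond the algebraic identity $\nablaT\dofT{v}=\vlproj{k}{T}\Pgrad{k+2}{T}\SGRAD{T}\dofT{v}$.

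The missing argument runs as follows. First extend \eqref{eq:def.potential.2} to all $\bvec w\in\Poly{k}(T)^2$: on $\Roly{k}(T)$ it reduces to the definition of $\nablaT$, and $\cRoly{k}(T)\subset\cRoly{k+2}(T)$. Rewrite $\nablaT$ via the identity above and test with $\bvec w=\vlproj{k}{T}\bvec\gamma$. Add the vanishing sum $\sum_{T}\sum_{E\in\ET}\omega_{TE}\int_E\gammaS\dofT{v}\,\bvec\gamma\cdot\normal_E$, then integrate by parts. Since $\GRAD\SPgrad\dofT{v}\in\Poly{k}(T)^2$ is orthogonal to $\bvec\gamma-\vlproj{k}{T}\bvec\gamma$, the element contribution becomes
\[
-\int_T(\bvec\gamma-\vlproj{k}{T}\bvec\gamma)\cdot\Pgrad{k+2}{T}\SGRAD{T}\dofT{v}
-\sum_{E\in\ET}\omega_{TE}\int_E\big(\SPgrad\dofT{v}-\gammaS\dofT{v}\big)\,(\bvec\gamma-\vlproj{k}{T}\bvec\gamma)\cdot\normal_E .
\]
The volume term is controlled by $\norm{L^2(T)^2}{\Pgrad{k+2}{T}\SGRAD{T}\dofT{v}}\le\norm{1,T}{\SGRAD{T}\dofT{v}}$, which follows directly from \eqref{eq:def.sp.grad}.

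The edge term is where the missing ingredient sits. You need the local stability estimate $\sum_{E\in\ET}h_E^{-1}\norm{L^2(E)}{\SPgrad\dofT{v}-\gammaS\dofT{v}}^2\lesssim\norm{1,T}{\SGRAD{T}\dofT{v}}^2$. This is the $H_2$-analogue of \cite[Lemma~7]{Di-Pietro.Droniou:23}, which the paper invokes. Combine it with the trace approximation property of $\vlproj{k}{T}$ and with $\norm{1,h}{\SGRAD{h}\dofh{v}}\le\mu^{-1/2}\norm{1\times2,h}{(\dofh{\bvec\phi},\dofh{v})}$ to obtain the $h^{k+1}\mu^{-1/2}\seminorm{H_{k+1}(\Th)^2}{\bvec\gamma}$ term. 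Without this estimate, or an equivalent one, piece (iii) remains unproved.
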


\begin{remark}[Result with less regularity]
Following Remark~\ref{rem:reduced.estimate}, one can assume weaker regularity on $(u,\bvec\psi)$ in the above theorem, namely, $u\in\Cspace{1}(\overline{\Omega})\cap H_{k+2}(\Th)$ and $\bvec{\psi}\in H_{k+2}(\Th)^2$, and modify the proof of Proposition~\ref{prop:consitency} to obtain the following right-hand side bound in Theorem~\ref{th:convergence}:
    \[
h^{k+1}\mu^{-1/2} \seminorm{H_{k+1}(\Th)^2}{\bvec \gamma} + h^{k+1}\beta_0^{-1/2}\seminorm{H_{k+1}(\Th)^{2\times 2}}{\bvec C \GRAD \bvec \psi}.
    \]
\end{remark}

\begin{proof}
The proof follows directly from the coercivity of the scheme, Lemma~\ref{lem:coercivity}, together with its consistency, Proposition~\ref{prop:consitency}, by applying the third Strang lemma \cite[Theorem~10]{Di-Pietro.Droniou:18}.
\end{proof}

\begin{proposition}[Consistency estimate]
\label{prop:consitency}
Assume that $(\bvec \psi, u) \in \HSobz$ is the solution to problem \eqref{eq:weak.rm.cont}, with the additional regularity $u\in \Cspace{1}(\overline{\Omega}) \cap H_{k+4}(\Th)$ and $\bvec{\psi} \in H_{k+3}(\Th)^2$. We define the consistency error $\const{\cdot}{\cdot}$ by
 \begin{align*}
     \const{\dofh{\bvec \phi}}{\dofh{v}} = l_h(\dofh{v}) - A_h((\IdRgrad \bvec \psi,\ISgrad u),(\dofh{\bvec \phi}, \dofh{v})).
 \end{align*}
 Then, for all non-zero pairs $(\dofh{\bvec{\phi}},\dofh{v})\in \tXdRgrado{\Th} \times \XSgrado{\Th}$,
\[ \frac{\const{\dofh{\bvec \phi}}{\dofh{v}}}{\norm{1\times 2,h}{(\dofh{\bvec \phi}, \dofh{v})}}  \lesssim h^{k+1}\mu^{-1/2} \seminorm{H_{k+1}(\Th)^2}{\bvec \gamma}  + h^{k+2}\beta_0^{-1/2}\seminorm{H_{k+2}(\Th)^{2\times2}}{\bvec C \GRAD \bvec \psi}+ h^{k+3}\kappa^{-1/2} t \seminorm{H_{k+3}(\Th)^2}{\bvec \gamma}.
   \]
\end{proposition}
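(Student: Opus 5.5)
We split the consistency error along the continuous equations. Let $\bvec\gamma=\frac{\kappa}{t^2}(\GRAD u-\bvec\psi)$, so that $f=-\DIV\bvec\gamma$ and $\bvec\gamma=-\DIV(\bvec C\GRAD\bvec\psi)$. Then
\[
\const{\dofh{\bvec\phi}}{\dofh{v}}=\mathfrak T_1+\mathfrak T_2,
\]
where
\[
\mathfrak T_1\coloneq(f,\SPgradh\dofh v)_{L^2(\Omega)}-\frac{\kappa}{t^2}\big(\IdRgrad\bvec\psi-\SGRAD{h}\ISgrad u,\dofh{\bvec\phi}-\SGRAD{h}\dofh v\big)_{1,h}
\]
and $\mathfrak T_2\coloneq-a_h(\IdRgrad\bvec\psi,\dofh{\bvec\phi})$. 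The key algebraic step is to add and subtract $(\bvec\gamma,\Pgrad{k+2}{h}\dofh{\bvec\phi})_{L^2(\Omega)^2}$, so that the $L^2$ pairing with $\bvec\gamma$ is distributed between the two blocks.

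\textbf{Shear/load block.} Use the commutation property $\SGRAD{h}\ISgrad u=\IdRgrad\GRAD u$ (valid for $u\in\Cspace{1}(\overline\Omega)$ by construction of the BGG complex). This gives
\[
\IdRgrad\bvec\psi-\SGRAD{h}\ISgrad u=-\frac{t^2}{\kappa}\IdRgrad\bvec\gamma .
\]
Hence the shear contribution equals $(\IdRgrad\bvec\gamma,\dofh{\bvec\phi}-\SGRAD{h}\dofh v)_{1,h}$. For the load, integrate by parts elementwise:
\[
(f,\SPgradh\dofh v)=(\bvec\gamma,\GRAD_h\SPgradh\dofh v)-\sum_T\int_{\partial T}\bvec\gamma\cdot\normal\,\SPgrad\dofh v .
\]
Then replace $\GRAD\SPgrad$ and the boundary values by $\Pgrad{k+2}{T}\SGRAD{T}\dofT v$ and $\gammaS$. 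This uses \eqref{eq:def.potential.2} and the polynomial consistency of the DDR potential; the boundary terms telescope thanks to the single-valuedness of the traces and the boundary conditions. What remains is the adjoint consistency error of the $1,h$ product,
\[
\mathcal E_{\gamma}\coloneq(\bvec\gamma,\Pgrad{k+2}{h}\dofh{\bvec w})-(\IdRgrad\bvec\gamma,\dofh{\bvec w})_{1,h},\qquad \dofh{\bvec w}=\dofh{\bvec\phi}-\SGRAD{h}\dofh v .
\]
This is estimated by the standard argument: split off $\vlproj{k+2}{T}\bvec\gamma$, use polynomial consistency of $\Pgrad{k+2}{T}$ and of $s_{1,T}$, and apply the approximation properties of the projector. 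It yields $|\mathcal E_\gamma|\lesssim h^{k+3}|\bvec\gamma|_{H_{k+3}}\|\dofh{\bvec w}\|_{1,h}$, and since $\frac{\kappa}{t^2}\|\dofh{\bvec w}\|_{1,h}^2$ is part of the energy norm, this gives the term $h^{k+3}\kappa^{-1/2}t$. The leftover piece $(\bvec\gamma,\GRAD_h\SPgradh\dofh v-\Pgrad{k+2}{h}\SGRAD{h}\dofh v)$ is bounded using the difference between the two reconstructions, which is $O(h^{k+1})$ against $\|\SGRAD{h}\dofh v\|_{1,h}$. Together with the remaining $\bvec\gamma$ pairing against $\Pgrad{k+2}{h}\dofh{\bvec\phi}$, this is controlled by the $\mu$-weighted part of the norm and gives $h^{k+1}\mu^{-1/2}|\bvec\gamma|_{H_{k+1}}$.

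\textbf{Bending block.} Write $-(\bvec\gamma,\Pgrad{k+2}{h}\dofh{\bvec\phi})=(\DIV(\bvec C\GRAD\bvec\psi),\Pgrad{k+2}{h}\dofh{\bvec\phi})$ and integrate by parts on each $T$. Then use \eqref{eq:int.by.part} with $\bvec\xi=\vlproj{k+1}{T}(\bvec C\GRAD\bvec\psi)$ to convert $\GRAD\Pgrad{k+2}{T}$ into $\tGTfull$. The boundary terms combine into $\sum_T\sum_E\int_E(\bvec C\GRAD\bvec\psi-\vlproj{k+1}{T}(\bvec C\GRAD\bvec\psi))\normal_E\cdot(\Pgrad{k+2}{T}\dofh{\bvec\phi}-\gammadR{\partial T}\dofh{\bvec\phi})$, because the $\gammadR{\partial T}$ contributions cancel across interior edges and vanish on the boundary. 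These are bounded by trace approximation estimates times $h_T^{-1}s_{1,T}^{1/2}$, i.e. $h^{k+2}\beta_0^{-1/2}|\bvec C\GRAD\bvec\psi|_{H_{k+2}}$ against the stabilisation part of the norm. Symmetry of $\bvec C\GRAD\bvec\psi$ lets us replace $\tGTfull$ by $\tGsTfull$. It remains to compare with $a_h(\IdRgrad\bvec\psi,\cdot)$. We use the commutation $\tGsTfull\IdRgradT\bvec\psi=\vlproj{k+1}{T}\GRAD_s\bvec\psi$, which follows from the analogous property of $\tGTfull$, and the consistency of $s_{1,T}$ on interpolants (tensorised \cite[Eq.~(6.8)]{Di-Pietro.Droniou:23}), namely $h_T^{-2}s_{1,T}(\IdRgradT\bvec\psi,\IdRgradT\bvec\psi)^{1/2}\lesssim h^{k+2}|\bvec\psi|_{H_{k+3}(T)}$. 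Together these give the second term.

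\textbf{Main obstacle.} The delicate step is the load term. We have to show that $(f,\SPgradh\dofh v)$ can be rewritten as a pairing of $\bvec\gamma$ with $\Pgrad{k+2}{h}\SGRAD{h}\dofh v$, up to $O(h^{k+1})$, which requires the compatibility between $\SPgrad$ and the DDR potential applied to $\SGRAD{T}$ (Remark~\ref{rem:rewriting.pot}). If this turns out to be nontrivial, I would first fall back on the cruder identity $\GRAD\SPgrad\dofT v\approx\nablaT\dofT v$ from \eqref{eq:def.potential.2}, projected onto $\Poly{k}$. That still produces the $h^{k+1}\mu^{-1/2}$ bound, which is the dominant term.
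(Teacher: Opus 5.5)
\textbf{Overall.} Your decomposition is the paper's. Your $\mathcal E_\gamma$ is exactly $-\mathcal I$ and is bounded the same way. Your load part is $\mathcal E_{\GRAD,h}(\bvec\gamma,\dofh v)$, and your bending block is $\mathcal E_{\GRAD_s,h}$. However, two steps fail as written, one in each of the other two blocks.

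\textbf{Bending block.} You handle it by integrating by parts through \eqref{eq:int.by.part}, instead of citing the commuting property and \cite[Theorem~9]{Di-Pietro.Droniou:23}. That hands-on route is a reasonable alternative, and its boundary residual is controlled by precisely the $s_{1,T}$ term of \eqref{eq:ah.modified}. But it closes on a false identity: in general $\tGsTfull\IdRgradT\bvec\psi\neq\vlproj{k+1}{T}\GRAD_s\bvec\psi$. The DDR commuting property only gives $\Rproj{k}{T}\tGTfull\IdRgradT\bvec\psi=\Rproj{k}{T}\GRAD\bvec\psi$ and $\Rcproj{k+1}{T}\tGTfull\IdRgradT\bvec\psi=\Rcproj{k+1}{T}\GRAD\bvec\psi$. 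Take a test tensor whose rows lie in $\Roly{k+1}(T)$ but not in $\Roly{k}(T)$. Then $\bvec\zeta\normal_E$ has degree $k+1$ on $E$, so it sees the part of $\gammadR{\partial T}\IdRgradT\bvec\psi-\bvec\psi$ not fixed by the $\Poly{k}(E)$-moments and vertex values. For instance, with $k=0$, $T=(0,1)^2$, $\bvec\psi=(xy^3,0)$ and the symmetric $\bvec\zeta=\mathrm{diag}(y,0)$, the two sides differ by $1/120$.

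Hence $\sum_T\big(\bvec C(\vlproj{k+1}{T}\GRAD_s\bvec\psi-\tGsTfull\IdRgradT\bvec\psi),\tGsTfull\dofT{\bvec\phi}\big)_{L^2(T)^{2\times2}}$ does not vanish and must be estimated. This is easy to repair. $\tGTfull\IdRgradT$ reproduces gradients of $\Poly{k+2}(T)^2$ and is bounded, so $\|\vlproj{k+1}{T}\GRAD\bvec\psi-\tGTfull\IdRgradT\bvec\psi\|_{L^2(T)^{2\times2}}\lesssim h_T^{k+2}|\bvec\psi|_{H_{k+3}(T)^2}$. Self-adjointness of $\bvec C$ then lets you pair this with $\|\bvec C\tGshfull\dofh{\bvec\phi}\|_{L^2(\Omega)^{2\times2}}$ from the energy norm.

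\textbf{Load block.} The difference $\GRAD\SPgrad\dofT v-\Pgrad{k+2}{T}\SGRAD{T}\dofT v$ is a purely discrete quantity and is not ``$O(h^{k+1})$'' in any sense. So $(\bvec\gamma,\GRAD_h\SPgradh\dofh v-\Pgrad{k+2}{h}\SGRAD{h}\dofh v)$ cannot be bounded on its own. Moreover, when you swap $\SPgrad\dofT v$ for $\gammaS\dofT v$ on $\partial T$, only the $\gammaS$ part telescopes. The residual $\sum_{T}\sum_{E\in\ET}\omega_{TE}\int_E(\SPgrad\dofT v-\gammaS\dofT v)\,\bvec\gamma\cdot\normal_E$ has been silently dropped.

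The missing step is your fallback made precise, and it is the paper's argument. Extend \eqref{eq:def.potential.2} to $\bvec w\in\Poly{k}(T)^2$, use $\nablaT\dofT v=\vlproj{k}{T}\Pgrad{k+2}{T}\SGRAD{T}\dofT v$, and integrate by parts to obtain
\[
\int_T(\GRAD\SPgrad\dofT v-\Pgrad{k+2}{T}\SGRAD{T}\dofT v)\cdot\bvec w=\sum_{E\in\ET}\omega_{TE}\int_E(\SPgrad\dofT v-\gammaS\dofT v)\,\bvec w\cdot\normal_E\qquad\forall\bvec w\in\Poly{k}(T)^2 .
\]
The volume leftover and the boundary residual therefore cancel jointly against $\bvec w=\vlproj{k}{T}\bvec\gamma$. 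The factor $h^{k+1}$ comes from $\bvec\gamma-\vlproj{k}{T}\bvec\gamma$, not from the difference of reconstructions. The discrete factors are bounded by $\|\SGRAD{T}\dofT v\|_{1,T}$ via the adapted \cite[Lemma~7]{Di-Pietro.Droniou:23}.

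Relatedly, no ``remaining'' pairing of $\bvec\gamma$ with $\Pgrad{k+2}{h}\dofh{\bvec\phi}$ is left in this block. It was already absorbed into $\mathcal E_\gamma$ and the bending block, and an $O(1)$ term of that form could not produce $h^{k+1}$ anyway.
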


\begin{proof}
Using the same techniques as in \cite[Theorem~4]{Di-Pietro.Droniou:22}, we rewrite the consistency error as
\begin{align*}
    \const{\dofh{\bvec \phi}}{\dofh{v}} = \mathcal{E}_{\GRAD,h}(\bvec \gamma, \dofh{v})  + \mathcal{I} + \mathcal{E}_{\GRAD_s,h}(\bvec C \GRAD_s \bvec \psi,\dofh{\bvec \phi}),
\end{align*}
where each term is defined and bounded below.

We first consider
\[
\mathcal{E}_{\GRAD,h}(\bvec \gamma, \dofh{v}) \coloneq -\int_{\Omega} \DIV\bvec \gamma \SPgradh\dofh{v} - \sum_{T\in\Th} \int_T \bvec{\gamma} \cdot  \Pgrad{k+2}{T}\SGRAD{T} \dofT{v}.
 \]
Using \eqref{eq:def.potential.2}, and following \cite[Remark~17]{Di-Pietro.Droniou:23} to extend this relation to test functions in the polynomial space $\Poly{k}(T)$, we have, for all $\bvec w \in \Poly{k}(T)^2$,
\[
\int_T \SPgrad\dofT{v} \DIV \bvec w + \int_T  \vlproj{k}{T}\,\Pgrad{k+2}{T}\, \SGRAD{T}\dofT{v}\cdot \bvec w - \sum_{E\in\ET} \omega_{TE} \int_E \gammaS \dofT{v}\, (\bvec w\cdot\normal_E)=0,
\]
where the term $\nablaT\dofT{v}$ in the definition has been replaced by its equivalent expression
$ \vlproj{k}{T}\,\Pgrad{k+2}{T}\, \SGRAD{T}\dofT{v}$; see
\cite[Eq.~(4.31)]{Di-Pietro.Droniou:23}.
Noting that $\vlproj{k}{T}$ can be removed since $\bvec{w}\in\Poly{k}(T)^2$ and adding this relation to $\mathcal{E}_{\GRAD,h}(\bvec \gamma, \dofh{v})$ yields
\begin{multline*}
\mathcal{E}_{\GRAD,h}(\bvec \gamma, \dofh{v})\\
=\sum_{T\in\Th} \left[ \int_T \SPgrad\dofT{v} \DIV \bvec (\bvec w-\bvec  \gamma) + \int_T  \Pgrad{k+2}{T}\SGRAD{T}\dofT{v}\cdot (\bvec w-\bvec \gamma) - \sum_{E\in\ET} \omega_{TE} \int_E \gammaS \dofT{v}\, (\bvec w- \bvec \gamma) \cdot\normal_E\right].
\end{multline*}
Notice that the additional quantity $\sum_{T\in\Th} \sum_{E\in\ET} \omega_{TE} \int_E \gammaS \dofT{v} \bvec \gamma \cdot\normal_E$ introduced above vanishes since $\bvec{\gamma}$ is continuous across the edges (by assumption on $(u,\bvec\psi)$) and $(\gammaS \dofT{v})_{|E}=0$ if $E\in\Eh^{\rm b}$, owing to $\dof{v}{h}\in\XSgrado{\Th}$.
We choose $\bvec w = \vlproj{k}{T} \bvec \gamma$ in the above relation. Integrating by parts in the first term and then applying the Cauchy--Schwarz inequality, together with the discrete trace inequality \cite[Lemma~1.32]{Di-Pietro.Droniou:20} and the approximation property of the $L^2$-orthogonal projector \cite[Theorem~1.45]{Di-Pietro.Droniou:20}, we obtain
\begin{align*}
 |\mathcal{E}_{\GRAD,h}(\bvec \gamma, \dofh{v})| \lesssim{}& \sum_{T\in\Th} h^{k+1}_T\seminorm{H_{k+1}(T)^2}{\bvec \gamma}\left(\norm{L^2(T)^2}{\GRAD \SPgrad\dofT{v}} + \norm{L^2(T)^{2}}{\Pgrad{k+2}{T}\SGRAD{T}\dofT{v}}\right)\\
 &{} + \sum_{T\in\Th}\sum_{E \in \ET} h^{k+1}_T \seminorm{H_{k+1}(T)^2}{\bvec \gamma} h_E^{-1/2}\norm{L^2(E)}{\SPgrad\dofT{v} - \gammaS \dofT{v} }  \\
\lesssim{}& h^{k+1}\mu^{-1/2}\seminorm{H_{k+1}(\Th)^2}{\bvec \gamma}  \norm{1\times 2,h}{(\dofh{\bvec \phi}, \dofh{v})}.
\end{align*}
The last estimate follows from the fact that \cite[Lemma~7]{Di-Pietro.Droniou:23} can be adapted to the space $\XSgrad{T}$, yielding, for all $\dofT{v}\in\XSgrad{T}$,
\[
\norm{L^2(T)^2}{\GRAD \SPgrad\dofT{v}}^2 + \sum_{E\in\ET}h_E^{-1}\norm{L^2(E)}{\SPgrad\dofT{v} - \gammaS \dofT{v}}^2 \lesssim \norm{1,T}{\SGRAD{T}\dofT{v}}^2,
\]
while \eqref{eq:def.sp.grad} and \eqref{eq:def.norm.energy} yield
$\norm{L^{2}(T)^2}{\Pgrad{k+2}{T}\SGRAD{T}\dofT{v}}\lesssim \norm{1,h}{\SGRAD{h}\dofh{v}}\lesssim \mu^{-1/2}\norm{1\times2,h}{(\dofh{\bvec\phi},\dofh{v})}.$
Next, we define
 \[
 \mathcal{I} \coloneq \sum_{T\in\Th} \left[\int_T (\bvec \gamma - \Pgrad{k+2}{T}\IdRgradT\bvec \gamma)\Pgrad{k+2}{T}(\SGRAD{T}\dofT{v} - \dofT{\bvec \phi}) + s_{1,T}(\IdRgradT \bvec \gamma , \dofT{\bvec \phi} - \SGRAD{T}\dofT{v})  \right].
 \]
First applying the Cauchy--Schwarz inequality, and then using the consistency of the potential \cite[Eq.~(6.2)]{Di-Pietro.Droniou:23} for the first term and the consistency of the stabilisation forms \cite[Eq.~(6.8)]{Di-Pietro.Droniou:23} for the second one, we obtain
 \begin{align*}
     \seminorm{}{\mathcal{I}} &\leq \sum_{T\in\Th} \norm{L^2(T)^2}{\bvec \gamma - \Pgrad{k+2}{T}\IdRgradT\bvec \gamma } \norm{L^2(T)}{\Pgrad{k+2}{T}(\SGRAD{T}\dofT{v} - \dofT{\bvec \phi})} \\
     &\quad+ s_{1,T}(\IdRgradT \bvec \gamma ,\IdRgradT \bvec \gamma)^{1/2}s_{1,T}(\dofT{\bvec \phi} - \SGRAD{T}\dofT{v},\dofT{\bvec \phi} - \SGRAD{T}\dofT{v})^{1/2} \\
     &\lesssim \sum_{T\in\Th}  h^{k+3}\seminorm{H_{k+3}(T)^2}{\bvec \gamma} \left(\norm{L^2(T)}{\Pgrad{k+2}{T}(\SGRAD{T}\dofT{v} - \dofT{\bvec \phi})}+s_{1,T}(\dofT{\bvec \phi} - \SGRAD{T}\dofT{v},\dofT{\bvec \phi} - \SGRAD{T}\dofT{v})^{1/2}\right) \\
     \overset{\eqref{eq:def.norm.energy}}&\lesssim h^{k+3}\seminorm{H_{k+3}(\Th)^2}{\bvec \gamma} \kappa^{-1/2} t \norm{1\times 2,h}{(\dofh{\bvec \phi}, \dofh{v})}.
 \end{align*}

Finally, we consider the term
\begin{align}
  \mathcal{E}_{\GRAD_s,h}(\bvec C \GRAD_s \bvec \psi,\dofh{\bvec \phi})\coloneq{}& \sum_{T\in\Th} \int_T \DIV(\bvec C \GRAD_s \bvec \psi) \cdot \Pgrad{k+2}{T}\dofT{\bvec \phi} - \int_{\Omega}\bvec C \tGshfull\IdRgrad \bvec \psi : \tGshfull \dofh{\bvec \phi} \nonumber\\
  &{}+ \beta_0 \sum_{T\in\Th}h_T^{-2}s_{1,T}(\IdRgradT{\bvec \psi},\dofT{\bvec \phi}), 
\label{eq:def.consistency.grad.s}
\end{align}
To bound this term, we first observe that $\bvec C\bvec A_s=\bvec C\bvec A$ for all $\bvec A\in\mathbb{R}^{2\times2}$. Moreover, the commuting property $\tGRAD{h}\IdRgrad\bvec\psi = \IdRrot\GRAD\bvec\psi$ (see \cite[Eq.~(3.38)]{Di-Pietro.Droniou:23}) allows us to apply directly the adjoint consistency result \cite[Theorem~9]{Di-Pietro.Droniou:23}, row-wise, with $\bvec v=\bvec C\GRAD\bvec\psi$. Combining this result with the consistency of the stabilisation term \cite[Eq.~(6.8)]{Di-Pietro.Droniou:23} yields the required bound. We therefore obtain
\begin{align*}
     \mathcal{E}_{\GRAD_s,h}(\bvec C \GRAD_s \bvec \psi,\dofh{\bvec \phi}) &{}\lesssim h^{k+2}(\beta_0 + \beta_1) \beta_0^{-1} \seminorm{H_{k+2}(\Th)^{2\times 2}}{\bvec C \GRAD \bvec \psi}\norm{1\times 2,h}{(\dofh{\bvec \phi}, \dofh{v})}.
\end{align*}
Combining the estimates of the three terms in the decomposition of the consistency error yields the desired estimate.
\end{proof}

\section{Locking-free error estimate}
\label{sec:locking.free}

The following theorem highlights the locking-free property of the scheme at the lowest order. 
A numerical method for the Reissner--Mindlin problem is said to be locking-free if the error estimate remains valid uniformly as the plate thickness $t$ tends to zero. 
This uniformity relies on the standard regularity estimates for the Reissner--Mindlin problem on convex domains, according to which $t|\boldsymbol{\gamma}|_{H^1(\Omega)^2}$ remains bounded as $t\to0$, while $\|f\|_{L^2(\Omega)}$, $|\boldsymbol{\psi}|_{H^2(\Omega)^2}$, and $\|\boldsymbol{\gamma}\|_{L^2(\Omega)^2}$ remain bounded; see \cite{Arnold.Falk:89}.
The estimate below is therefore locking-free, owing to the factor $t$ multiplying the term $\seminorm{H_1(\Th)^2}{\bvec \gamma}$.

\begin{theorem}[Lowest-order error estimate]
\label{th:low-order.estimate}
Set $k=0$ and let $(\bvec \psi, u) \in \HSobz$ and $(\dofh{\bvec{\psi}},\dofh{u})\in \underline{H}_{1,0}^{1}(\Th)^2 \times \underline{H}_{2,1,0}^{0}(\Th)$ be the solutions to problems \eqref{eq:weak.rm.cont} and \eqref{eq:weak.rm}, respectively. Assume that $\Omega$ is convex. Then, it holds
    \begin{multline*}
    \norm{1\times2,h}{(\dofh{\bvec{\psi}}-\underline{\bvec{I}}_{\ddr,h}^{1}\bvec{\psi},\dofh{u}-\underline{I}_{\stokes,h}^{0} u)} \\
    \lesssim h\left( (\beta_0 + \beta_1) \beta_0^{-1} \seminorm{H_{2}(\Th)^2}{\bvec \psi}  + \beta_0^{-1/2}\norm{L^2(\Omega)^2}{\bvec \gamma}+ \kappa^{-1/2}t\seminorm{H_{1}(\Th)^2}{\bvec \gamma}+ \mu^{-1/2}\norm{L^2(\Omega)}{f} \right).
    \end{multline*}
\end{theorem}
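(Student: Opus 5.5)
The proof follows the same Strang-lemma strategy as Theorem~\ref{th:convergence}. Coercivity (Lemma~\ref{lem:coercivity}) reduces the error estimate to a bound on the consistency error $\const{\dofh{\bvec\phi}}{\dofh{v}}$, divided by $\norm{1\times2,h}{(\dofh{\bvec\phi},\dofh{v})}$, for $k=0$. We reuse the decomposition of Proposition~\ref{prop:consitency} into $\mathcal E_{\GRAD,h}(\bvec\gamma,\dofh{v})$, $\mathcal I$ and $\mathcal E_{\GRAD_s,h}(\bvec C\GRAD_s\bvec\psi,\dofh{\bvec\phi})$, but bound each term with only one order of approximation. The goal is that $\bvec\gamma$ appears either through $\norm{L^2(\Omega)^2}{\bvec\gamma}$, through $\DIV\bvec\gamma=-f$, or multiplied by $t$.

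For $\mathcal E_{\GRAD_s,h}$, we apply the adjoint consistency \cite[Theorem~9]{Di-Pietro.Droniou:23} at order one together with the consistency of $s_{1,h}$. This gives $h(\beta_0+\beta_1)\beta_0^{-1}\seminorm{H_2(\Th)^2}{\bvec\psi}$, after bounding $\bvec C\GRAD\bvec\psi$ in $H_1$ by $(\beta_0+\beta_1)\seminorm{H_2}{\bvec\psi}$. The term $\mathcal I$ is the shear-consistency term. Using Cauchy--Schwarz and the weight $\kappa t^{-2}$ in the energy norm, we get
\[
|\mathcal I|\lesssim \kappa^{-1/2}t\Big(\sum_T\norm{L^2(T)^2}{\bvec\gamma-\Pgrad{2}{T}\IdRgradT\bvec\gamma}^2+s_{1,T}(\IdRgradT\bvec\gamma,\IdRgradT\bvec\gamma)\Big)^{1/2}\norm{1\times2,h}{\cdot}.
\]
Since only $\bvec\gamma\in H_1$ is available, we bound these quantities by $h\seminorm{H_1(\Th)^2}{\bvec\gamma}$. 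This requires the $H_1$-version of the consistency of the potential and stabilisation (polynomial consistency plus boundedness of the interpolator on $H_1$-type arguments, as in Remark~\ref{rem:reduced.estimate}). This produces the locking-free term $h\kappa^{-1/2}t\seminorm{H_1(\Th)^2}{\bvec\gamma}$.

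The main obstacle is $\mathcal E_{\GRAD,h}(\bvec\gamma,\dofh{v})$. In Proposition~\ref{prop:consitency} it was bounded by $h\mu^{-1/2}\seminorm{H_1}{\bvec\gamma}$, which blows up like $t^{-1}$. Following the lowest-order argument of \cite{Di-Pietro.Droniou:22} and Arnold--Falk, we use a lifting. We split $\SGRAD{h}\dofh{v}=\dofh{\bvec\phi}-(\dofh{\bvec\phi}-\SGRAD{h}\dofh{v})$. The part involving $\dofh{\bvec\phi}-\SGRAD{h}\dofh{v}$ is controlled by $\kappa^{-1/2}t\,\norm{1\times2,h}{\cdot}$ times $h\seminorm{H_1}{\bvec\gamma}$, giving again the $t$-weighted term.

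For the remaining part, we construct from $\dofh{v}$ a function $\tilde v\in H_{1,0}(\Omega)$, for instance the continuous piecewise polynomial determined by the vertex and edge unknowns of $\dofh{v}$. We then write $\mathcal E_{\GRAD,h}=(f,\SPgradh\dofh{v}-\tilde v)+(\bvec\gamma,\GRAD\tilde v-\Pgrad{2}{h}\SGRAD{h}\dofh{v})$, using $\int\bvec\gamma\cdot\GRAD\tilde v=(f,\tilde v)$. The first term is bounded by $h\norm{L^2}{f}\norm{1,h}{\SGRAD{h}\dofh{v}}\le h\mu^{-1/2}\norm{L^2}{f}\norm{1\times2,h}{\cdot}$, via a discrete Poincaré/approximation estimate. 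For the second term we insert $\dofh{\bvec\phi}$, using that for $k=0$ the difference between $\GRAD\tilde v$ and the reconstructed gradient is controlled at order $h$ by the discrete gradient of $\dofh{\bvec\phi}$ plus the shear defect. This yields $h\norm{L^2}{\bvec\gamma}$ times $\beta_0^{-1/2}\norm{1\times2,h}{\cdot}$, via the Korn--Poincaré inequality \eqref{eq:pc-korn}.

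Verifying this last lifting estimate at $k=0$ (the only place where the lowest order is used) is the delicate step. I would first try to prove it locally, using the polynomial exactness of $\SPgrad$ and $\Pgrad{2}{T}$ on $\Poly{1}$. Collecting the three bounds and using convexity only to justify the regularity $\bvec\psi\in H_2$, $\bvec\gamma\in H_1$ concludes the proof.
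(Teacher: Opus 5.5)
Your overall plan matches the paper's, which adapts \cite[Theorem~6]{Di-Pietro.Droniou:22}: the Strang lemma with Lemma~\ref{lem:coercivity}, the same three-term splitting, a suboptimal adjoint-consistency bound for $\mathcal E_{\GRAD_s,h}$, a $t$-weighted bound for $\mathcal I$, and a conforming lifting of $\dofh{v}$ so that $-\DIV\bvec\gamma=f$ can be used. The gap is in the step you flag as delicate. It is structural, not technical: a lifting of $\dofh{v}$ alone cannot make the insertion of $\dofh{\bvec\phi}$ work.

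First, the piece $(\bvec\gamma,\Pgrad{2}{h}(\dofh{\bvec\phi}-\underline{\bvec G}^0_{2,h}\dofh{v}))$ produced by your split has no cancellation. Since $\vlproj{0}{T}\Pgrad{2}{T}\dofT{\bvec w}=\bvec w_T$, for a constant $\bvec\gamma$ (where your claimed bound is zero) it equals $\sum_{T\in\Th}|T|\,\bvec\gamma\cdot(\bvec\phi_T-\bvec G^0_{2,T}\dofT{v})$. This is of size $\kappa^{-1/2}t\norm{L^2(\Omega)^2}{\bvec\gamma}$, not $h\kappa^{-1/2}t\seminorm{H_1(\Th)^2}{\bvec\gamma}$. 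Second, $\GRAD\tilde v-\Pgrad{2}{h}\underline{\bvec G}^0_{2,h}\dofh{v}$ is $O(h)$ only relative to a discrete Hessian of $\dofh{v}$, namely $\underline{\bvec G}^1_{1,h}\dofh{\bvec\phi}-\underline{\bvec G}^1_{1,h}(\dofh{\bvec\phi}-\underline{\bvec G}^0_{2,h}\dofh{v})$. The inverse inequality needed on the second part consumes the factor $h$. Either way the shear defect enters without $h$, leaving a term $\kappa^{-1/2}t\norm{L^2(\Omega)^2}{\bvec\gamma}$ that is not in the target estimate.

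What is missing is an extension of the gradient mismatch from the range of $\underline{\bvec G}^0_{2,T}$ to the whole space $\underline H^1_1(T)^2$. The extension must have zero mean on each cell and be $O(h_T)$ in a discrete $H_1$-seminorm. This is the paper's vector lifting of Section~\ref{sec:lifting}. One decomposes $\dofT{\bvec\phi}=\underline{\bvec G}^0_{2,T}\dofT{q}+\dofT{\bvec\theta}$ with $\dofT{\bvec\theta}\perp\underline{\bvec G}^0_{2,T}\underline H^0_2(T)$, and sets $R_{1,T}\dofT{\bvec\phi}=\GRAD R_{2,T}\dofT{q}+\vlproj{0}{T}\Pgrad{2}{T}\dofT{\bvec\theta}$. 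The scalar lifting $R_{2,T}$ must have boundary trace $\gamma^1_{2,\partial T}\dofT{q}$, so that $\vlproj{0}{T}\GRAD R_{2,T}\dofT{q}=\bvec G^0_{2,T}\dofT{q}$. Consequently:
(i) $R_{1,h}\underline{\bvec G}^0_{2,h}\dofh{v}=\GRAD R_{2,h}\dofh{v}$;
(ii) $\vlproj{0}{T}R_{1,T}=\vlproj{0}{T}\Pgrad{2}{T}$;
(iii) $\norm{L^2(T)^2}{(R_{1,T}-\Pgrad{2}{T})\dofT{\bvec\phi}}\lesssim h_T\norm{\VROT,T}{\underline{\bvec G}^1_{1,T}\dofT{\bvec\phi}}$, the analogue of \cite[Lemma~10]{Di-Pietro.Droniou:22}.
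Your second term then becomes
\[
(\bvec\gamma,(R_{1,h}-\Pgrad{2}{h})\dofh{\bvec\phi})-(\bvec\gamma-\vlproj{0}{h}\bvec\gamma,(R_{1,h}-\Pgrad{2}{h})(\dofh{\bvec\phi}-\underline{\bvec G}^0_{2,h}\dofh{v})).
\]
For the first part, (iii) and \eqref{eq:korn} give $h\beta_0^{-1/2}\norm{L^2(\Omega)^2}{\bvec\gamma}$. For the second part, (ii) supplies the factor $h\seminorm{H_1(\Th)^2}{\bvec\gamma}$, yielding the locking-free term $h\kappa^{-1/2}t\seminorm{H_1(\Th)^2}{\bvec\gamma}$. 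Your remaining steps (the $f$-term via the scalar lifting, $\mathcal I$, and $\mathcal E_{\GRAD_s,h}$) are in line with the paper.
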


\begin{proof}[Proof of Theorem~\ref{th:low-order.estimate}]
    We use a suboptimal version of \cite[Theorem~9]{Di-Pietro.Droniou:23} and \cite[Eq.~(6.8)]{Di-Pietro.Droniou:23}; see Remark~\ref{rem:reduced.estimate} below. This yields
    \[
    \mathcal{E}_{\GRAD_s,h}(\bvec C \GRAD_s \bvec \psi,\dofh{\bvec \phi}) \lesssim h(\beta_0 + \beta_1) \beta_0^{-1} \seminorm{H_{2}}{\bvec \psi} \norm{1\times 2,h}{(\dofh{\bvec \phi}, \dofh{v})}.
    \]
    The rest of the proof is a straightforward adaptation of \cite[Theorem~6]{Di-Pietro.Droniou:22}. 
    Notice that the lifting operators used in the reference must be adapted to the present spaces; these liftings are made explicit in Section~\ref{sec:lifting}.
\end{proof}

\begin{remark}[Reduced degree of approximation in some estimates]
\label{rem:reduced.estimate}
    We observe that the estimates in \cite[Eq.~(6.8)]{Di-Pietro.Droniou:23} and \cite[Theorem~9]{Di-Pietro.Droniou:23}, among others, can be used in a suboptimal form for any $\ell \in \llbracket 0 , k \rrbracket$, rather than only with the maximal degree $k$. 
    This follows from the fact that these estimates rely on the approximation properties of the $L^2$-orthogonal projector $\lproj{k}{\cdot}$, for which the approximation degree can be reduced; see \cite[Theorem~1.45]{Di-Pietro.Droniou:20}. 
\end{remark}

\subsection{Lifting}
\label{sec:lifting}

The proof of Theorem~\ref{th:low-order.estimate} relies on suitable lifting operators defined on the discrete spaces. In this section, we adapt these liftings to the present setting. For each $T\in\Th$, we first define a local reduction operator $\injgrad{T}: \XSgrad{T} \to \underline{H}_{1}^{k}(T)$, where
\begin{equation*}
\begin{aligned}
    \underline{H}_{1}^{k}(T) \coloneq \Big\{{}&
    \underline{q}_T=
    \bigl(q_T,(q_E)_{E\in\ET},(q_V)_{V\in\VT}\bigr)\,:\\
    &q_T\in\Poly{k-1}(T),\quad
    q_E\in\Poly{k-1}(E)\quad\forall E\in\ET,\quad
    q_V\in\Real\quad\forall V\in\VT
    \Big\}.
\end{aligned}
\end{equation*}
For all $\dofT{q}\in\XSgrad{T}$, this operator is simply defined by discarding the components corresponding to derivatives:
\begin{equation*}
\injgrad{T}(\dofT{q})
=
\bigl(q_T,(q_E)_{E\in\ET},(q_V)_{V\in\VT}\bigr).
\end{equation*}

We then define the local lifting operator $R_{2,T}:\underline{H}_{2}^{0}(T)\to\Poly{1}(T)$ by
$R_{2,T}\dofT{q}=\widetilde{\injgrad{T}\dofT{q}},$ where $\widetilde{\cdot}$ denotes the lifting introduced at the beginning of
\cite[Section~4.4]{Di-Pietro.Droniou:22}, restricted to the element $T$. The results of \cite[Lemma~9]{Di-Pietro.Droniou:22} remain valid with this lifting, replacing $\underline{\bvec G}_T^0$ by $\underline{\bvec G}_{2,T}^0$ and the scalar product $\norm{\bvec{\Theta},T}{\cdot}$ by $\norm{1,T}{\cdot}$. Here, $\underline{\bvec G}_T^0$ and $\norm{\bvec{\Theta},T}{\cdot}$ denote, respectively, the discrete gradient and scalar product defined in \cite[Section~3.2.1 and Eq.~(24)]{Di-Pietro.Droniou:22}.

Indeed, for $\dofT{q} \in \underline{H}_2^0(T)$, since $\bvec G_{2,T}^0\dofT{q}$ and $(\Gqet\dofT{q})_{E\in\ET}$ do not depend on the components $(\Gqv)_{V\in\VT}$ and $(\Gqen)_{E\in\ET}$, we have
\begin{align*}
\bvec G_{T}^0 \injgrad{T} \dofT{q} &= \bvec G_{2,T}^{0} \dofT{q}, \\
G_E^0  \injgrad{E} \dof{q}{E} &= \Gqet \dof{q}{E}, \quad \forall E \in \ET,
\end{align*}
where $\bvec G_{T}^0 \injgrad{T} \dofT{q}$ and $( G_E^0 \injgrad{E} \dof{q}{E})_{E\in\ET}$ are the components of $\underline{\bvec G}_T^0 \injgrad{T} \dofT{q}$. Moreover, since $\underline{\bvec G}_{2,T}^0 \dofT{q}$ contains more degrees of freedom than $\underline{\bvec G}_T^0 \injgrad{T} \dofT{q}$, we also have
\[
\norm{\bvec{\Theta},T}{\underline{\bvec G}_T^0 \injgrad{T} \dofT{q}} \leq \norm{1,T}{\underline{\bvec G}_{2,T}^0 \dofT{q}},
\]
because both norms are equivalent to the sum of appropriately scaled componentwise norms; see \cite[Lemma~5]{Di-Pietro.Droniou:23}.

On $\underline{H}_{1}^{1}(T)^2$, we first consider the decomposition
$
\underline{\bvec G}_{2,T}^0 \underline{H}_{2}^{0}(T) \oplus \Bigl(\underline{\bvec G}_{2,T}^0 \underline{H}_{2}^{0}(T)\Bigr)^{\perp}.
$
For any $\dofT{\bvec \phi} \in \underline{H}_{1}^{1}(T)^2$, we write the unique decomposition
\[
\dofT{\bvec \phi} = \underline{\bvec G}_{2,T}^0 \dofT{q} + \dofT{\bvec \theta}, \quad \text{with } \dofT{q}\in\underline{H}_{2}^{0}(T) \text{ and } \dofT{\bvec \theta} \perp \underline{\bvec G}_{2,T}^0 \underline{H}_{2}^{0}(T).
\] 
We then define the local lifting operator
$R_{1,T} : \underline H_1^1(T)^2 \to L^2(T)^2$ by
\[R_{1,T}\dofT{\bvec\phi}=
\GRAD R_{2,T}\dofT{q}+\lproj{0}{T}\Pgrad{2}{T}\dofT{\bvec\theta}.\]
With this choice of lifting, the results of \cite[Lemma~10]{Di-Pietro.Droniou:22} remain valid, and the proof carries over without any significant modification.

\section{Numerical results}
\label{sec:numerical.results}

We assess the numerical behaviour of the scheme \eqref{eq:weak.rm} with both choices \eqref{eq:ah.hl} and \eqref{eq:ah.modified}. All the computations are performed on the unit square $\Omega=(0,1)^2$. Unless stated otherwise, the Young's modulus and the Poisson ratio hidden in the physical parameters $\beta_0$ and $\beta_1$ are set to $E=1\text{ and } \nu=0.3.$

Let $(\bvec\psi,u)$ denote the solution of the continuous problem \eqref{eq:weak.rm.cont} and let $(\dofh{\bvec\psi},\dofh u)$ denote the solution of the discrete problem \eqref{eq:weak.rm}. The relative energy error is defined by
\begin{equation}
\label{eq:def.relative.energy.error}
E_h \coloneq \frac{\left\|\left(\dofh{\bvec\psi}-\IdRgrad\bvec\psi,\dofh u-\ISgrad u\right)\right\|_{1\times 2,h}}{\left\|\left(\IdRgrad\bvec\psi,\ISgrad u\right)\right\|_{1 \times 2,h}}.
\end{equation}

The first solution is a smooth polynomial test case used to assess the convergence rate with respect to the meshsize. The second solution exhibits a stronger dependence of the shear strain on the plate thickness and is therefore used to investigate the robustness of the method in the thin-plate regime.

For both test cases, the numerical tests are performed using the two variants of the scheme  \eqref{eq:weak.rm}. In the figures, the results are labelled \emph{HL} for the purely Hodge--Laplacian formulation, in which the bending bilinear form $a_h$ is defined by \eqref{eq:ah.hl}, and \emph{non-HL} for the modified formulation, in which the stabilisation acts directly on the primal degrees of freedom and $a_h$ is defined by \eqref{eq:ah.modified}. Note that, when computing the relative energy error \eqref{eq:def.relative.energy.error} for the \emph{HL} scheme, the bending stabilisation in the norm \eqref{eq:def.norm.energy} must be replaced by its \emph{HL} counterpart $s_{\VROT,h}$ (see Section~\ref{sec:scalar.product}).

\subsection{Polynomial solution}
\label{sec:test.polynomial}

The first test case is based on the exact polynomial solution used in \cite{Chinosi.Lovadina.Marini:06}. The transverse displacement is
defined, for all $\bvec{x}=(x_1,x_2)\in\Omega$, by
\begin{align*}
u(\bvec{x}) ={}& \frac{1}{3} x_1^3(1-x_1^3)x_2^3(1-x_2)^3\\
&-\frac{2t^2}{5(1-\nu)}  \Big[ x_2^3(x_2-1)^3x_1(x_1-1)(5x_1^2-5x_1+1) + x_1^3(x_1-1)^3x_2(x_2-1) (5x_2^2-5x_2+1) \Big],
\end{align*}
while the rotation field is given by
\begin{equation*}
\bvec{\psi}(\bvec{x}) =
\begin{pmatrix}
x_2^3(x_2-1)^3
x_1^2(x_1-1)^2(2x_1-1)
\\[0.2cm]
x_1^3(x_1-1)^3
x_2^2(x_2-1)^2(3x_2-1)
\end{pmatrix}.
\end{equation*}
The corresponding shear strain and transverse load are obtained from the
strong equations as
\[
\bvec{\gamma} = \frac{\kappa}{t^2} \bigl(\GRAD u-\bvec{\psi}\bigr), \qquad f=-\operatorname{div}\bvec{\gamma}.
\]

Both $u$ and $\bvec{\psi}$ satisfy the homogeneous clamped boundary conditions. Moreover, although the displacement depends explicitly on $t$, the corresponding shear strain is independent of the plate thickness.
This test case therefore provides a smooth setting in which the spatial convergence rates can be assessed without the influence of thickness-dependent boundary layers. In this test, we only verify the expected behaviour of the scheme, with a fixed thickness $t=1$. The expected convergence rate of order $k+1$, as established in Theorem~\ref{th:convergence}, is observed. Results are displayed in Figure \ref{fig:conv.polynomial}. 

 \begin{figure}\centering
   \ref{conv.hexa}
   \vspace{0.50cm}\\
  \begin{minipage}{0.45\textwidth}
     \begin{tikzpicture}[scale=0.85]
       \begin{loglogaxis}[legend columns=3, legend to name=conv.hexa]
         \addplot [thick, mark=star, red] table[x=MeshSize,y=EnError] {outputs/polynomial_solution/hexa_k0_t1/data_rates.dat};
         \logLogSlopeTriangle{0.90}{0.4}{0.1}{1}{black};
          \addlegendentry{$k=0$}
         \addplot [thick, mark=*, blue] table[x=MeshSize,y=EnError] {outputs/polynomial_solution/hexa_k1_t1/data_rates.dat};
         \logLogSlopeTriangle{0.90}{0.4}{0.1}{2}{black};
          \addlegendentry{$k=1$}
         \addplot [thick, mark=square*, black] table[x=MeshSize,y=EnError] {outputs/polynomial_solution/hexa_k2_t1/data_rates.dat};
         \logLogSlopeTriangle{0.90}{0.4}{0.1}{3}{black};
          \addlegendentry{$k=2$}
        \end{loglogaxis}            
     \end{tikzpicture}
     \subcaption{Hexagonal mesh}
   \end{minipage}
     \begin{minipage}{0.45\textwidth}
     \begin{tikzpicture}[scale=0.85]
       \begin{loglogaxis}[legend columns=3, legend to name=conv.hexaB]
         \addplot [thick, mark=star, red] table[x=MeshSize,y=EnError] {outputs/polynomial_solution/tri_k0_t1/data_rates.dat};
         \logLogSlopeTriangle{0.90}{0.4}{0.1}{1}{black};
          \addlegendentry{$k=0$}
         \addplot [thick, mark=*, blue] table[x=MeshSize,y=EnError] {outputs/polynomial_solution/tri_k1_t1/data_rates.dat};
         \logLogSlopeTriangle{0.90}{0.4}{0.1}{2}{black};
          \addlegendentry{$k=1$}
         \addplot [thick, mark=square*, black] table[x=MeshSize,y=EnError] {outputs/polynomial_solution/tri_k2_t1/data_rates.dat};
         \logLogSlopeTriangle{0.90}{0.4}{0.1}{3}{black};
          \addlegendentry{$k=2$}
        \end{loglogaxis}            
     \end{tikzpicture}
     \subcaption{Triangular mesh}
   \end{minipage}
     \begin{minipage}{0.45\textwidth}
     \begin{tikzpicture}[scale=0.85]
       \begin{loglogaxis} [legend columns=3, legend to name=conv.hexaC]
         \addplot [thick, mark=star, red] table[x=MeshSize,y=EnError] {outputs/polynomial_solution/cart_k0_t1/data_rates.dat};
         \logLogSlopeTriangle{0.90}{0.4}{0.1}{1}{black};
          \addlegendentry{$k=0$}
         \addplot [thick, mark=*, blue] table[x=MeshSize,y=EnError] {outputs/polynomial_solution/cart_k1_t1/data_rates.dat};
         \logLogSlopeTriangle{0.90}{0.4}{0.1}{2}{black};
          \addlegendentry{$k=1$}
         \addplot [thick, mark=square*, black] table[x=MeshSize,y=EnError] {outputs/polynomial_solution/cart_k2_t1/data_rates.dat};
         \logLogSlopeTriangle{0.90}{0.4}{0.1}{3}{black};
          \addlegendentry{$k=2$}
        \end{loglogaxis}            
     \end{tikzpicture}
     \subcaption{Cartesian mesh}
   \end{minipage}
   \caption{Error $E_h$ (see \eqref{eq:def.relative.energy.error}) with respect to the meshsize $h$ for the polynomial solution of Section \ref{sec:test.polynomial}. \emph{Non-HL} scheme. \label{fig:conv.polynomial}}
 \end{figure}

 \begin{figure}\centering
   \ref{conv.hexa.2}
   \vspace{0.50cm}\\
  \begin{minipage}{0.45\textwidth}
     \begin{tikzpicture}[scale=0.85]
       \begin{loglogaxis}[legend columns=3, legend to name=conv.hexa.2]
         \addplot [thick, mark=star, red] table[x=MeshSize,y=EnError] {outputs/polynomial_solution_HL/hexa_k0_t1/data_rates.dat};
         \logLogSlopeTriangle{0.90}{0.4}{0.1}{1}{black};
          \addlegendentry{$k=0$}
         \addplot [thick, mark=*, blue] table[x=MeshSize,y=EnError] {outputs/polynomial_solution_HL/hexa_k1_t1/data_rates.dat};
         \logLogSlopeTriangle{0.90}{0.4}{0.1}{2}{black};
          \addlegendentry{$k=1$}
         \addplot [thick, mark=square*, black] table[x=MeshSize,y=EnError] {outputs/polynomial_solution_HL/hexa_k2_t1/data_rates.dat};
         \logLogSlopeTriangle{0.90}{0.4}{0.1}{3}{black};
          \addlegendentry{$k=2$}
        \end{loglogaxis}            
     \end{tikzpicture}
     \subcaption{Hexagonal mesh}
   \end{minipage}
     \begin{minipage}{0.45\textwidth}
     \begin{tikzpicture}[scale=0.85]
       \begin{loglogaxis}[legend columns=3, legend to name=conv.hexa.2B]
         \addplot [thick, mark=star, red] table[x=MeshSize,y=EnError] {outputs/polynomial_solution_HL/tri_k0_t1/data_rates.dat};
         \logLogSlopeTriangle{0.90}{0.4}{0.1}{1}{black};
          \addlegendentry{$k=0$}
         \addplot [thick, mark=*, blue] table[x=MeshSize,y=EnError] {outputs/polynomial_solution_HL/tri_k1_t1/data_rates.dat};
         \logLogSlopeTriangle{0.90}{0.4}{0.1}{2}{black};
          \addlegendentry{$k=1$}
         \addplot [thick, mark=square*, black] table[x=MeshSize,y=EnError] {outputs/polynomial_solution_HL/tri_k2_t1/data_rates.dat};
         \logLogSlopeTriangle{0.90}{0.4}{0.1}{3}{black};
          \addlegendentry{$k=2$}
        \end{loglogaxis}            
     \end{tikzpicture}
     \subcaption{Triangular mesh}
   \end{minipage}
     \begin{minipage}{0.45\textwidth}
     \begin{tikzpicture}[scale=0.85]
       \begin{loglogaxis} [legend columns=3, legend to name=conv.hexa.2C]
         \addplot [thick, mark=star, red] table[x=MeshSize,y=EnError] {outputs/polynomial_solution_HL/cart_k0_t1/data_rates.dat};
         \logLogSlopeTriangle{0.90}{0.4}{0.1}{1}{black};
          \addlegendentry{$k=0$}
         \addplot [thick, mark=*, blue] table[x=MeshSize,y=EnError] {outputs/polynomial_solution_HL/cart_k1_t1/data_rates.dat};
         \logLogSlopeTriangle{0.90}{0.4}{0.1}{2}{black};
          \addlegendentry{$k=1$}
         \addplot [thick, mark=square*, black] table[x=MeshSize,y=EnError] {outputs/polynomial_solution_HL/cart_k2_t1/data_rates.dat};
         \logLogSlopeTriangle{0.90}{0.4}{0.1}{3}{black};
          \addlegendentry{$k=2$}
        \end{loglogaxis}            
     \end{tikzpicture}
     \subcaption{Cartesian mesh}
   \end{minipage}
   \caption{Error $E_h$ (see \eqref{eq:def.relative.energy.error}) with respect to the meshsize $h$ for the polynomial solution of Section \ref{sec:test.polynomial}. \emph{HL} scheme. \label{fig:conv.polynomial.2}}
 \end{figure}

\subsection{Analytical solution}
\label{sec:test.analytical}

We now consider the analytical solution introduced in \cite[Section~5.2]{Di-Pietro.Droniou:22}, which is designed to provide a more representative behaviour in the thin-plate regime.

The transverse displacement and the rotation are defined by
\begin{equation*}
u(\bvec{x})=v(t,\bvec{x})+t^2w(t,\bvec{x}),\qquad\bvec{\psi}(\bvec{x})=\GRAD v(t,\bvec{x}),
\end{equation*}
where
\begin{alignat*}{2}
v(t,\bvec{x})&=t^3V(t^{-1}\bvec{x})+g(\bvec{x}),&\qquad w(t,\bvec{x})&=-\frac{\beta_0+\beta_1}{\kappa}\Delta v(t,\bvec{x}),\\
V(\bvec{y})&= y_1e^{-y_1}\cos(y_2),&\qquad g(\bvec{x})&= \sin(\pi x_1)\sin(\pi x_2).
\end{alignat*}

The corresponding shear strain and transverse load are given by
\begin{equation*}
\bvec{\gamma}(t,\bvec{x})=-(\beta_0+\beta_1)\GRAD\Delta v(t,\bvec{x}),
\end{equation*}
and
\begin{equation*}
f(\bvec{x})=(\beta_0+\beta_1)\Delta^2g(\bvec{x}).
\end{equation*}
In particular, the load is independent of the plate thickness. Since this solution does not satisfy homogeneous clamped boundary conditions, its exact boundary values are imposed in the discrete problem.

As $t\to0$, this solution satisfies
\begin{equation*}
\|u\|_{H_3(\Omega)}\sim 1,\qquad\|\bvec{\psi}\|_{H_2(\Omega)^2}\sim 1,\qquad\|\bvec{\gamma}\|_{L^2(\Omega)^2}\sim 1,
\end{equation*}
whereas, for every integer $s\geq1$,
\begin{equation*}
|\bvec{\gamma}|_{H_s(\Omega)^2}\sim t^{-s+\frac{1}{2}}.
\end{equation*}
Thus, the shear strain remains bounded in $L^2(\Omega)^2$, while its higher-order derivatives grow as the plate becomes thinner. This test case is therefore used to assess the robustness of the method in the thin-plate regime. We perform computations for several values of $t$ and at the lowest order $k=0$. For the sake of legibility, we only report the results obtained using the scheme \eqref{eq:weak.rm} with the choice \eqref{eq:ah.modified}. The results, presented in Figure~\ref{fig:conv.analytical}, confirm the robustness of the method with respect to $t$ and exhibit the first-order convergence rate established in Theorem~\ref{th:low-order.estimate}.
\begin{figure}\centering
  \ref{conv.hexa.analytical}
  \vspace{0.50cm}\\
  \begin{minipage}{0.45\textwidth}
    \begin{tikzpicture}[scale=0.85]
      \begin{loglogaxis}[legend columns=3, legend to name=conv.hexa.analytical]
        \addplot [thick, mark=star, mark options=solid, red] table[x=MeshSize,y=EnError] {outputs/lockingfree/hexa_k0_t1e-1/data_rates.dat};
        \addlegendentry{$t=10^{-1}$}
        \logLogSlopeTriangle{0.90}{0.4}{0.1}{1}{black};
        \addplot [thick, mark=*, mark options=solid, blue, dashed] table[x=MeshSize,y=EnError] {outputs/lockingfree/hexa_k0_t1e-2/data_rates.dat};
        \addlegendentry{$t=10^{-2}$}
        \addplot [thick, mark=square*, mark options=solid, black, densely dotted] table[x=MeshSize,y=EnError] {outputs/lockingfree/hexa_k0_t1e-3/data_rates.dat};
        \addlegendentry{$t=10^{-3}$}
      \end{loglogaxis}            
    \end{tikzpicture}
    \subcaption{Hexagonal mesh}
  \end{minipage}
  \caption{Error $E_h$ (see \eqref{eq:def.relative.energy.error}) with respect to the meshsize $h$ for the analytical solution of Section \ref{sec:test.analytical}, for $k=0$. \label{fig:conv.analytical}}
\end{figure}

\subsection{Numerical impact of the \texorpdfstring{$H_2$}{H2}-based construction}
\label{sec.adventage.H2}

We investigate whether the enhanced continuity of the $H^2$-based DDR--BGG construction leads to improved numerical results by comparing it with the DDR--HHO method for the Reissner--Mindlin problem introduced in \cite{Di-Pietro.Droniou:22}. The results presented here are valid for the scheme \eqref{eq:weak.rm} with both choices \eqref{eq:ah.hl} and \eqref{eq:ah.modified}. Moreover, at the lowest order $k=0$, the DDR--BGG energy error exhibits an observed convergence rate of approximately $1.5$ on both hexagonal and Cartesian meshes. This contrasts with the rate close to
one observed for the DDR--HHO scheme; see \cite[Figure~2]{Di-Pietro.Droniou:22}.

The effect of this enhanced continuity is expected to be reflected most directly in the interelement jumps of the reconstructed quantities. To measure these jumps consistently, for any broken vector-valued reconstruction $\bvec q_h$ and any interior edge $E\in\mathcal E_h^{\mathrm i}$, we introduce the local indicator
\begin{equation}
J_{1,E}(\bvec q_h)\coloneq h_E^{-1/2}\norm{L^2(E)^2}{[\bvec q_h]_E},
\label{eq.jump.discrete.gradient}
\end{equation}
and the corresponding global indicator
\begin{equation}
J_{1,h}(\bvec q_h)\coloneq\left(\sum_{E\in\mathcal E_h^{\mathrm i}}J_{1,E}(\bvec q_h)^2\right)^{1/2}
=\left(\sum_{E\in\mathcal E_h^{\mathrm i}}h_E^{-1}\norm{L^2(E)^2}{[\bvec q_h]_E}^2\right)^{1/2}.
\label{eq:def.jump.rotation.h1}
\end{equation}
The factor $h_E^{-1}$ gives the jump term the same scaling as the elementwise $L^2$-norm of the gradient. 

We denote by $P_h\dofh{\bvec{\psi}}$ the broken polynomial reconstruction of the discrete rotation, where $P_h$ denotes the corresponding broken potential reconstruction operator and $\dofh{\bvec{\psi}}$ the discrete approximation of the rotation for the method under consideration. Figure~\ref{fig:jump.theta.comparison} compares the global indicator $J_{1,h}(P_{h}\dofh{\bvec \psi})$ for the DDR--HHO and DDR--BGG schemes. For the analytical solution on triangular meshes with $k=1$, the jump converges with orders two and three for DDR--HHO and DDR--BGG, respectively. For the polynomial solution on hexagonal meshes with $k=0$, the corresponding observed orders are one and two. These results highlight the improved numerical behaviour provided by the enhanced continuity of the DDR--BGG construction.

\begin{figure}\centering
  \ref{conv.theta.comparison}
  \vspace{0.50cm}\\
  \begin{minipage}{0.45\textwidth}
    \begin{tikzpicture}[scale=0.85]
      \begin{loglogaxis}[legend columns=2, legend to name=conv.theta.comparison]
        \addplot [thick, mark=star, mark options=solid, red]
        table[x expr={sqrt(\thisrow{DimEXCurl}+\thisrow{DimXGrad})},y=JumpThetaH1]
        {outputs/comparison/rmddr/tri_s2_k1_t1/data_rates.dat};
        \addlegendentry{DDR--HHO}

        \addplot [thick, mark=*, mark options=solid, blue, dashed]
        table[x expr={sqrt(\thisrow{DimVSXGrad}+\thisrow{DimXHess})},y=JumpThetaH1]
        {outputs/comparison/rmbgg/tri_s2_k1_t1/data_rates.dat};
        \addlegendentry{DDR--BGG}
      \end{loglogaxis}
    \end{tikzpicture}
    \subcaption{Analytical solution, triangular mesh, $k=1$}
  \end{minipage}
  \hfill
  \begin{minipage}{0.45\textwidth}
    \begin{tikzpicture}[scale=0.85]
      \begin{loglogaxis}[legend columns=2, legend to name=conv.theta.comparisonB]
        \addplot [thick, mark=star, mark options=solid, red]
        table[x expr={sqrt(\thisrow{DimEXCurl}+\thisrow{DimXGrad})},y=JumpThetaH1]
        {outputs/comparison/rmddr/hexa_s1_k0_t1/data_rates.dat};
        \addlegendentry{DDR--HHO}

        \addplot [thick, mark=*, mark options=solid, blue, dashed]
        table[x expr={sqrt(\thisrow{DimVSXGrad}+\thisrow{DimXHess})},y=JumpThetaH1]
        {outputs/comparison/rmbgg/hexa_s1_k0_t1/data_rates.dat};
        \addlegendentry{DDR--BGG}
      \end{loglogaxis}
    \end{tikzpicture}
    \subcaption{Polynomial solution, hexagonal mesh, $k=0$}
  \end{minipage}

  \caption{Scaled jump of the reconstructed rotation $J_{1,h}(P_h\dofh{\bvec\psi})$ (see \eqref{eq:def.jump.rotation.h1}) as a function of the square root of the total number of DOFs.}
  \label{fig:jump.theta.comparison}
\end{figure}

To complement this quantitative comparison with a qualitative illustration, we consider a test case with a discontinuous transverse load
\begin{equation*}
f(\bvec{x})=
\begin{cases}
100, & \bvec{x}\in B_{0.12}\bigl((0.35,0.5)\bigr),\\
-100, & \bvec{x}\in B_{0.12}\bigl((0.65,0.5)\bigr),\\
0, & \text{otherwise},
\end{cases}
\end{equation*} 
where $B_r(\bvec{x}_0)$ denotes the ball of radius $r$ centred at $\bvec{x}_0$.

The two localised loads of opposite signs generate a sharp transition in the quantities derived from the displacement near the centre of the plate. As no exact solution is available for this problem, the purpose of this test is not to measure an approximation error, but to compare visually the local interelement behaviour of a quantity derived from the approximate displacement. The convexity of the domain $\Omega$ implies that the displacement solution to this problem is expected to satisfy $u\in H^2(\Omega)$. Therefore, $\GRAD u\in H^1(\Omega)^2$, and a good approximation of the displacement should have a reconstructed gradient with small jumps across the mesh edges.

We apply the local indicator \eqref{eq.jump.discrete.gradient} to $\bvec q_h=P_hG_h\dofh{u}$, where $G_h\dofh{u}$ denotes the discrete displacement gradient and $P_hG_h\dofh{u}$ its natural potential reconstruction for the method under consideration. Figure~\ref{fig:grad_jump_visu} displays the results obtained for both methods, with $k=0$ and $t=1$ on a hexagonal mesh. The DDR--BGG reconstruction displays visibly smoother behaviour across the mesh skeleton, whereas the DDR--HHO reconstruction exhibits pronounced discontinuities around the region where the load changes sign, with a maximum local scaled jump of approximately $0.89$ for DDR--BGG compared with $17.65$ for DDR--HHO, corresponding to a reduction by a factor of about $20$. This enhanced regularity comes at the cost of additional degrees of freedom: on the mesh considered here, the DDR--HHO and DDR--BGG discrete spaces contain 3760 and 9882 degrees of freedom, respectively. This experiment illustrates qualitatively how the enhanced continuity inherited from the discrete $H^2$ structure improves the behaviour of quantities derived from the displacement.
\begin{figure}
  \centering

  \begin{minipage}{0.45\textwidth}
    \centering
    \includegraphics[height=7cm]{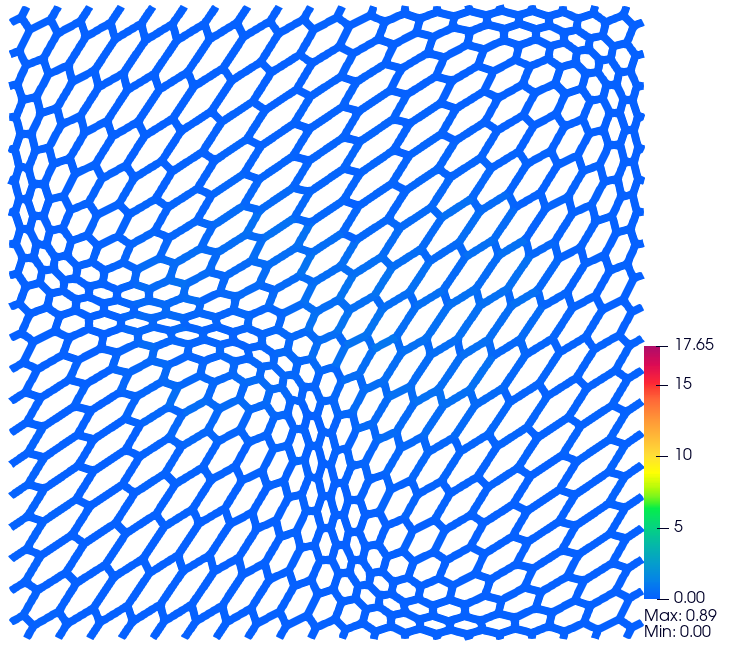}
    \subcaption{DDR--BGG}
  \end{minipage}
  \hfill
  \begin{minipage}{0.45\textwidth}
    \centering
    \includegraphics[height=7cm]{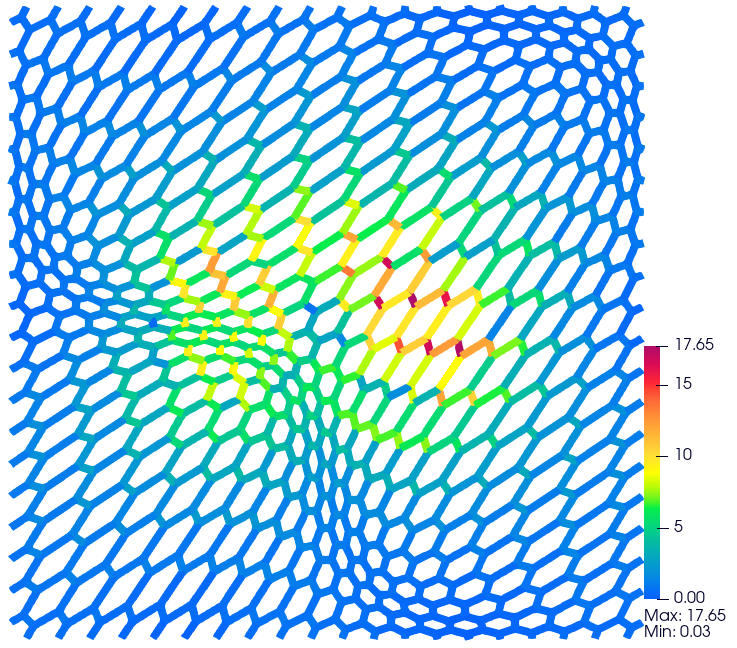}
    \subcaption{DDR--HHO}
  \end{minipage}

  \caption{Local scaled jumps $J_{1,E}(P_hG_h\dofh{u})$ see \eqref{eq.jump.discrete.gradient}) of the reconstructed discrete displacement gradient.}
  \label{fig:grad_jump_visu}
\end{figure}

\section{Thin-plate limit of the discrete schemes}
\label{sec:conv.models}

We now formally study the behaviour of the discrete Reissner--Mindlin scheme \eqref{eq:weak.rm}, for the two choices \eqref{eq:ah.hl} and \eqref{eq:ah.modified}, as the plate thickness $t$ tends to zero while the meshsize $h$ remains fixed. For each $t>0$, let
$(\dofh{\bvec\psi},\dofh{u})$ denote the corresponding discrete solution, with the dependence on $t$ left implicit. As the load $f$ is independent of $t$, a standard energy estimate for the scheme shows that its discrete energy is uniformly bounded with respect to $t$. Since the shear contribution to the energy is multiplied by $t^{-2}$, this boundedness yields
\begin{equation*}
  \norm{1,h}{\SGRAD{h}\dofh{u}-\dofh{\bvec\psi}} \lesssim t.
\end{equation*}
Consequently, as in the continuous setting,
\[
 \SGRAD{h} \dofh{u} - \dofh{\bvec\psi} \longrightarrow 0.
\]
For the choice \eqref{eq:ah.hl}, this formal limit leads to the following scheme: find $\dofh{u}\in\underline{H}_{2,0}^k(\Th)$ such that
\begin{equation}
\label{eq:discrete.kl}
  (\Hess{h} \dofh{u}, \Hess{h} \dofh{q} )_{\bvec{C},\VROT,h} = ( f, \SPgradh\dofh{q} )_{L^2(\Omega)}\qquad \forall \dofh{q}\in\underline{H}_{2,0}^k(\Th),
\end{equation}
where 
\begin{equation*}
  \underline{H}_{2,0}^k(\Th) \coloneq \left\{
  \dofh{q}\in \XSgrad{\Th} \,:\,  \ q_E = G_{q,E}^n = 0 \quad \forall  E \in \Eh^{\rm b},\,%
  \text{$q_V=0$ and $\bvec{G}_{q,V}=\bvec{0}$} \quad \forall V \in \Vh^{\rm b} \right\},
\end{equation*}
is a discrete counterpart of $H_{2,0}(\Omega)$, and the discrete Hessian operator is $\Hess{h}\coloneq \tGRAD{h}\SGRAD{h}$. This space appears because $\dofh{u}\in\XSgrad{\Th}$ and $\SGRAD{h}\dofh{u}\in\tXdRgrado{\Th}$ (as $\SGRAD{h}\dofh{u}= \dofh{\bvec \psi}$ imposes $ G_{u,E}^n = 0 $ for all $E\in\Eh^b$ and $\bvec{G}_{u,V}=\bvec{0}$ for all $V\in\Vh^b$).
When $\bvec C=\bvec\Id$ and no serendipity reduction of the cell unknowns is performed, the scheme \eqref{eq:discrete.kl} coincides with the Kirchhoff--Love scheme introduced in \cite{Di-Pietro.Droniou.ea:26.1}.
For the choice \eqref{eq:ah.modified}, the discrete scheme is a modified scheme for the Kirchhoff--Love model that can be proved to have the same convergence rate as the previous one using the techniques of \cite{Di-Pietro.Droniou.ea:26.1}. Find $\dofh{u}\in\underline{H}_{2,0}^k(\Th)$ such that
\begin{align*}
  (\bvec C \Hessh \dofh{u}, \Hessh \dofh{v} )_{L^2(\Omega)^{2\times 2 }} + \beta_0\sum_{T\in\Th} h_{T}^{-2} s_{1,T}(\SGRAD{T}\dofT{u},\SGRAD{T}\dofT{v}) = &{}( f, \SPgradh\dofh{v} )_{L^2(\Omega)},\\
  &{}\forall \dofh{v}\in\underline{H}_{2,0}^k(\Th).
\end{align*}

This relation between the discrete Reissner--Mindlin and Kirchhoff--Love schemes highlights the link between the discrete twisted complex \eqref{twisted-elasticity-boundary} and the discrete Hessian complex of \cite[Eq.~(6.1)]{Di-Pietro.Droniou.ea:26}, as already observed at the continuous level in \cite{Cap.Hu:24}.

\section*{Acknowledgements}

The author acknowledges the funding of the European Union via the ERC Synergy, NEMESIS, project number 101115663.

Views and opinions expressed are however those of the author only and do not necessarily reflect those of the European Union or the European Research Council Executive Agency. Neither the European Union nor the granting authority can be held responsible for them.

\appendix

\section{Notation}\label{appendix:notations}

  \begin{center}
    \renewcommand{\arraystretch}{1.2}
    \begin{longtable}{ccc}
      \toprule
      \textbf{Symbol} & \textbf{Continuous counterpart} & \textbf{Definition} \\
      \midrule
      \multicolumn{3}{c}{Discrete spaces} \\
      \midrule
      $\XSgrad{\Th}$
      & $H_2(\Omega)$
      & \eqref{eq:def.Xhess} \\
      $\XSrot{\Th}$
      & $H_1(\Omega)^2$
      & \eqref{eq:def.Xgrad} \\
      $\tXdRrot{\Th}$
      & $\boldsymbol{H}_{\VROT}(\Omega)$
      & \eqref{eq:def.Xrot} \\
      \midrule
      \multicolumn{3}{c}{Interpolators} \\
      \midrule
      $\ISgrad$
      & ---
      & \eqref{eq:def.ISgrad} \\
      $\IdRgrad$
      & ---
      & \eqref{eq:def.IdRgrad} \\
      $\IdRrot$
      & ---
      & \eqref{eq:def.IdRrot} \\
      \midrule
      \multicolumn{3}{c}{Discrete differential operators} \\
      \midrule
      $\SGRAD{h}$
      & $\GRAD : H_2(\Omega) \to H_1(\Omega)$
      & \eqref{eq:def.nablah}  \\
      $\tGRAD{h}$
      & $\GRAD : H_1(\Omega)^2 \to \boldsymbol{H}_{\VROT}(\Omega)$
      & \eqref{eq:def.uGh1} \\
      \midrule
      \multicolumn{3}{c}{Discrete potentials} \\
      \midrule
      $\SPgradh$
      & $q\in H_2(\Omega) $
      &  \eqref{eq:def.potential.2} \\
      $\Pgrad{k+2}{h}$
      & $\bvec v \in H_1(\Omega)^2 $
      & \eqref{eq:def.potential.1} \\
      $ \Prot{k+1}{h}$
      &  $\bvec{\xi} \in \boldsymbol{H}_{\VROT}(\Omega)$
      & \cite[Sec.~4]{Di-Pietro.Droniou:23} \\
      \bottomrule
    \end{longtable}
\end{center}


\printbibliography


\end{document}